\documentclass{article}

\usepackage{amsmath}  
\usepackage{amssymb}  
\usepackage{amscd}    
\usepackage{enumerate}
\usepackage{comment}  
\usepackage{graphicx} 
\usepackage{multirow}            
\usepackage[dvipsnames]{xcolor} 
\usepackage[colorlinks=true,allbordercolors=white, citecolor=blue]{hyperref}

\usepackage{amsthm}   
\theoremstyle{plain}
\newtheorem{theorem}{Theorem}

\newtheorem{proposition}[theorem]{Proposition}
\newtheorem{corollary}[theorem]{Corollary}

\theoremstyle{definition}
\newtheorem{definition}[theorem]{Definition}

\newcommand{\dps}[1]{ \color{red} DPS: { #1} \color{black}}
\newcommand{\dfh}[1]{ \color{blue} DFH: { #1} \color{black}}

\newcommand{\pf}{\noindent{\bf Proof.} }

\newcommand{\F}{\mathbb{F}}
\newcommand{\Z}{\mathbb{Z}}

\newcommand{\R}{\mathbb{R}}

\newcommand{\si}{\mathcal{S}}
\newcommand{\alt}{\mathcal{A}}

\title{Essential quotients of the cohomology of products of symmetric and alternating groups}
\author{Dana Hunter and Dev Sinha \\ \small \href{mailto:dfhunter@fortlewis.edu}{dfhunter@fortlewis.edu} and \href{mailto:dps@uoregon.edu}{dps@uoregon.edu} }

\date{}

\begin{document}

\maketitle
{\let\thefootnote\relax\footnote{2020 Mathematics Subject Classifications 20J06, 12G05, 20B30}}

\begin{abstract}
    We calculate the quotient of the mod-two cohomology of
    a product of symmetric and alternating groups by the ideal
    of negligible elements, namely those that restrict trivially to any order-two
    subgroup. This quotient arises in the study of
    cohomological invariants of fields, as by work of Serre the ideal of negligible elements coincides with the ideal of elements that pull
    back trivially
    in Galois cohomology.
\end{abstract}

\section{Statements}

We focus exclusively on cohomology with $\Z/2$ coefficients, and algebras over $\Z/2$, letting $H^i(G)$ denote $H^i(G;\Z/2)$.

\begin{definition}\label{basicdef}
\begin{enumerate}
    \item 
An element $x \in H^*(G)$ is called {\bf negligible} if for
any field $k$ and continuous homomorphism from its Galois group $\phi: \Gamma_k \to G$ we have $\phi^*(x) = 0 \in H^*(\Gamma_k)$.

\item A class $x \in H^*(G)$ satisfies the {\bf order-two restriction property} if for any order-two subgroup $C \subset G$ the restriction of $x$ to $H^*(C)$ is zero.
\end{enumerate}

\end{definition}

Our work builds on results of Serre, namely Theorem 26.3 of \cite{Se03} and Theorem 7.4.1 of \cite{Serre26}, which relate these notions.

\begin{theorem}[Serre]\label{equiv}
    Let $G$ be a product of symmetric and alternating groups.
    An element of cohomology
of $G$ is negligible if and only if it satisfies the order-two
restriction property.  
\end{theorem}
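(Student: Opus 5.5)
The argument has two directions of very different difficulty.

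\emph{Negligible implies the order-two restriction property.} This direction is formal and holds for every finite group $G$. Let $C \subset G$ have order two. Take $k = \R$, so that $\Gamma_\R \cong \Z/2$, and compose an isomorphism $\Gamma_\R \to C$ with the inclusion $C \subset G$ to obtain a continuous homomorphism $\phi$. Then $\phi^*(x)$ is the restriction of $x$ to $H^*(C)$, transported along the isomorphism $H^*(C) \cong H^*(\Gamma_\R)$. Since $x$ is negligible, this restriction vanishes.

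\emph{The order-two restriction property implies negligible.} This is the substantive direction. I would reduce it to \emph{detection by elementary abelian $2$-subgroups generated by reflections}.

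\begin{enumerate}
\item For $\si_n$, the mod-two cohomology is detected on the elementary abelian subgroups of the form $(\Z/2)^{n_1}\times\cdots$ generated by disjoint transpositions, or more generally by the subgroups $V_{2^k}$ acting regularly.

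\item Equivalently, use Serre's form of the statement: $H^*(\si_n)$ is detected, modulo negligible classes, by the subgroup $(\Z/2)^{\lfloor n/2\rfloor}$ generated by disjoint transpositions. Galois-theoretically, any étale algebra of degree $n$ becomes, after passing to a suitable $2$-primary extension, a product of quadratic algebras, and odd-degree extensions are harmless because restriction followed by corestriction is multiplication by an odd number.

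\item Hence a class $x$ is negligible iff its pullback to the Galois cohomology of every field through every morphism $\Gamma_k \to (\Z/2)^m \subset \si_n$ vanishes.
\end{enumerate}

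\emph{Reduction to elementary abelian groups.} For an elementary abelian $E = (\Z/2)^m$, Milnor's conjecture (Voevodsky) identifies the negligible ideal in $H^*(E) = \Z/2[t_1,\dots,t_m]$. A map $\Gamma_k \to E$ is a tuple $(a_1,\dots,a_m)$ of square classes. The image of a polynomial is computed using $t_i \mapsto (a_i)$ together with $(a)^2 = (a)(-1)$. Consequently, the negligible ideal is generated by relations of the form $t_i^2 - t_i\,(-1)$, suitably interpreted. Checking all maps $\Gamma_k \to E$ reduces to checking on the cyclic subgroups of order two, by taking $k$ to be iterated Laurent series fields $\R((s_1))\cdots((s_r))$, whose Galois groups are $\Z/2 \times \hat{\Z}^r$-like. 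This is exactly Serre's argument in \cite{Se03}.

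\emph{Products and alternating groups.} For products, the Künneth theorem together with the fact that a product of reflection subgroups is a reflection subgroup of the product extends the argument. Alternating groups are the main obstacle, since $\alt_n$ contains no transpositions. Here I would use the subgroups generated by pairs of disjoint transpositions, as in \cite{Serre26} Theorem 7.4.1, together with the fact that $H^*(\alt_n)$ is detected by its elementary abelian subgroups of that form up to negligible classes. Equivalently, I would pass through $\alt_n \subset \si_n$, viewing $\alt_n$-torsors as étale algebras with trivialized discriminant.

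The key step I expect to be hardest is establishing detection modulo negligibles for $\alt_n$ and for products. I would cite Serre's theorems for this rather than reprove it.
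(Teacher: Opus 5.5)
Your argument that negligible classes satisfy the order-two restriction property is correct, and it works for any finite group. For the converse, the paper gives no argument of its own: it attributes the statement to Serre (Theorem 26.3 of \cite{Se03}, Theorem 7.4.1 of \cite{Serre26}). So ending with a citation is consistent with the paper, and your skeleton is the right one: detection modulo negligibles by $E_{sym}$, followed by the elementary abelian computation.

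The justifications you sketch for both steps would fail, however. Step~1 is not ``equivalent'' to Step~2. Quillen-type detection says a class is \emph{zero} if all its restrictions to elementary abelian subgroups vanish. It says nothing about negligibility, because a homomorphism $\Gamma_k \to \si_n$ need not land in any elementary abelian subgroup; a cyclic quartic extension gives $\Gamma_k \to \Z/4 \subset \si_4$. (Also, the subgroups generated by disjoint transpositions alone do not detect $H^*(\si_4)$; one needs $V_2$ too.) What you need is detection modulo negligibles by $E_{sym}$, i.e.\ Serre's splitting principle, and your justification of it does not work:
\begin{itemize}
\item Restriction to an extension of even degree need not be injective on mod-two Galois cohomology.
\item Restriction--corestriction only permits odd-degree base changes. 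These push $\phi(\Gamma_k)$ into a $2$-Sylow subgroup and no further. A cyclic quartic extension stays cyclic quartic after every odd-degree base change, so it never becomes multiquadratic.
\end{itemize}
Passing from a $2$-Sylow subgroup (products of iterated wreath products of $\si_2$) down to $E_{sym}$ is exactly the non-formal content of Serre's theorem. It has to be cited; it cannot be derived this way.

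Your elementary abelian step is aimed in the wrong direction. Evaluating on special fields such as iterated Laurent series over $\R$ can only exhibit non-vanishing. It can never show that a class dies over \emph{every} field, which is what ``order-two restriction implies negligible'' requires. Moreover, $(-1)$ is not an element of $H^*(E)$, so ``$t_i^2 - t_i(-1)$'' is not a relation there, and nothing like Voevodsky's theorem is needed. The correct argument is elementary. The kernel of restriction to all order-two subgroups of $E$ is the ideal generated by the $t_i^2t_j + t_it_j^2$; this is the algebraic part of the proof of Theorem~\ref{elemabelian}. Each generator is negligible because $(a)\cup(a) = (a)\cup(-1)$ in Galois cohomology, so $(a)^2(b) = (a)(b)(-1) = (a)(b)^2$. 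You already wrote down this identity; it should be combined with that algebraic fact, not with special fields.

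Finally, the K\"unneth theorem does not handle products. Negligibility is not compatible with tensor decompositions, since exchange relations mix classes from different factors, as the paper notes at the end. The right reduction is to apply the splitting principle one factor at a time, holding the torsor for the other factor fixed. This is legitimate because Serre's splitting principle holds for invariants over an arbitrary base field.
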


\begin{definition}
    The negligible elements form an ideal, called the {\bf negligible ideal}. We call the quotient by 
this ideal the {\bf essential quotient}, denoted $H_{ess}^*(G)$.
\end{definition}

In this paper  we compute
these essential quotients for any product of symmetric and alternating groups.  To our knowledge these give
the first explicit full calculations beyond abelian groups \cite{B-FS94, GM24} and dihedral
groups \cite{Gherman23}. Structural theorems and low-degree calculations have been more of a focus, in particular of  Gherman and Merkurjev \cite{GM22, GM24}.

Our first  result is the following.  

\begin{theorem}\label{main}
Let $w_i$ denote the $i$th Stiefel-Whitney class of the standard
representation of $\si_{2n}$ acting through a permutation
of a basis, or its restriction to $\alt_{2n}$.
    \begin{itemize}
        \item $H^*_{ess}(\si_{2n})$ is generated by the classes $w_{i}$,  for $1 \leq i \leq n$.
        \item For $n \geq 3$, $H^*_{ess}(\alt_{2n} )$ is generated by the classes $w_{2j}$,  for $2 \leq 2j \leq n$.
        \item $H^*_{ess}(\alt_4 )$ is generated by $w_2$ and a class $c$ in degree three.

        \item If $G$ is a product of symmetric and alternating groups, 
        $H^*_{ess}(G)$ is generated by pull-backs of the above
        classes from constituent factors.
    \end{itemize}
\end{theorem}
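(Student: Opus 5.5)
By Theorem~\ref{equiv}, $H^*_{ess}(G)$ is the image of the total restriction map
\[
H^*(G) \longrightarrow \prod_{C} H^*(C) = \prod_C \F_2[t_C],
\]
where $C$ runs over the order-two subgroups of $G$. Restrictions to conjugate subgroups agree, so it suffices to use one representative of each conjugacy class. We will compute this image.

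For $\si_{2n}$, the involutions are the products of $k$ disjoint transpositions, $1 \le k \le n$. Let $C_k$ be the subgroup generated by such a product. Restricted to $C_k$, the permutation representation is $k$ copies of the regular representation plus a trivial summand, so its total Stiefel--Whitney class is $(1+t)^k$. Hence
\[
w_i \mapsto \binom{k}{i}\, t^i \quad \text{on } C_k .
\]
For surjectivity we need every element of the image to be a polynomial in $w_1,\dots,w_n$. Here we use the detection of $H^*(\si_{2n})$ by elementary abelian subgroups, or Feshbach's generators and relations, or the Giusti--Salvatore--Sinha Hopf ring description. Every class is a sum of products of ``Hopf ring monomials''. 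The key observation is that a Hopf ring generator coming from a block of size $2^m$ with $m \ge 2$ restricts to zero on every $C_k$: its restriction to an order-two subgroup factors through the cohomology of $\si_{2^m}$ restricted along an involution, and Dickson invariants of rank $\ge 2$ vanish on rank-one subgroups. So the only surviving contributions come from products of classes of the form $\gamma_1$ and $1$, and these are polynomials in the $w_i$. A cleaner route is to argue via the product $\si_2^{n} \subset \si_{2n}$ together with the Weyl-group action: the image lies in the symmetric functions of the $t$'s restricted to the ``diagonal'' subspaces, and these are generated by elementary symmetric functions, which are the $w_i$. I expect this vanishing step, showing that non-$w$ classes restrict to zero on all order-two subgroups, to be the main obstacle. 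I will try the Hopf ring/Dickson-vanishing argument first.

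For $\alt_{2n}$, the involutions are the $C_k$ with $k$ even. Also, for $n \ge 3$, every involution of $\alt_{2n}$ lies in a conjugate of $\si_{2n-2}$ embedded in $\alt_{2n}$ via the sign twist on a fixed transposition, or more directly we can use restriction from $\si_{2n}$. On $C_k$ with $k$ even we have $\binom{k}{i}\equiv 0$ for odd $i$ when $k$ is a power-of-two-relevant case; precisely, Lucas' theorem gives $\binom{k}{i}$ odd only if $i \preceq k$ in binary, which forces $i$ even. So the odd $w_i$ vanish in the essential quotient, and the even ones survive. For surjectivity we show that every order-two restriction pattern in the image comes from $\si_{2n}$. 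For this we use that an involution of $\alt_{2n}$ with $n \ge 3$ is centralized by enough of $\si_{2n}$ to make restriction from $\si_{2n}$ to $\alt_{2n}$ surjective on the essential quotient, via a transfer/double-coset argument applied to the essential images. The case $\alt_4$ is special. Its three involutions are all conjugate and generate $V_4$, and $H^*(\alt_4) = H^*(V_4)^{\Z/3}$ contains a degree-three invariant $c$ not coming from $\si_4$. We compute its restriction to a single $C_2$ directly and show that it is nonzero and not a multiple of $w_2$ restricted.

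For products $G = \prod G_j$, the order-two subgroups are generated by tuples $(g_j)$ of elements that are involutions or trivial. The restriction to such a subgroup factors through the diagonal map $C \to \prod C_j$. By the K\"unneth theorem, $H^*(G) = \bigotimes H^*(G_j)$, and restricting a tensor product along the diagonal gives the product of the individual restrictions, in which every $t_{C_j}$ becomes $t_C$. Hence a class that is negligible in one factor gives, after tensoring with anything, a negligible class. So the essential quotient of $G$ is a quotient of $\bigotimes H^*_{ess}(G_j)$, and it is therefore generated by the pulled-back generators of the factors.
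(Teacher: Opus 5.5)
\textbf{There is a genuine gap: surjectivity for $\alt_{2n}$ with $n\ge 3$.} You never show that every class of $H^*(\alt_{2n})$ agrees, modulo negligibles, with a polynomial in the $w_{2j}$. The phrase ``centralized by enough of $\si_{2n}$ \ldots via a transfer/double-coset argument'' gives no mechanism, and its natural readings fail.

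\emph{The transfer goes the wrong way.} For $x\in H^*(\alt_{2n})$ and $\sigma$ odd, $\mathrm{res}\,\mathrm{tr}(x)=x+\sigma^*x$. Since $\si_{2n}$-classes of involutions do not split in $\alt_{2n}$, $\sigma^*x$ and $x$ restrict identically to every order-two subgroup. So $\mathrm{res}\,\mathrm{tr}(x)$ is always negligible.

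\emph{The centralizer criterion cannot distinguish $n\ge 3$ from $n=2$.} Every involution of $\alt_4$ is also centralized by an odd permutation ($(12)$ centralizes $(12)(34)$). Yet $H^*_{ess}(\si_4)\to H^*_{ess}(\alt_4)$ is not surjective, because of $c$. Any correct argument must use $n\ge 3$ essentially, and yours never says where.

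\emph{The obstruction is real already for $\alt_8$.} On $E_{alt}=V_2\times V_2$, the class $c_+\otimes c_+$ is invariant under $W_1\times W_1$ and under the block swap. It is even the restriction of the Evens norm of $c_+$ from $\alt_4\wr\si_2\subset\alt_8$. It restricts to $t^6$ on $\langle(12)(34)(56)(78)\rangle$. By contrast, every degree-six class coming from $\si_8$ restricts to zero there: only $w_4$ is nonzero on that subgroup, and $6$ is not a multiple of $4$. What rules this class out is the Weyl element $\tau_{1,2}$. It swaps $x_1\leftrightarrow x_2$ in both blocks at once and sends $c_+\otimes c_+$ to $c_-\otimes c_-$.

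\textbf{The missing idea.} For $\alt_{4m}$ you need the detecting subgroup $V_2^{\times m}$, its Weyl group $W_m$, and the elements $\tau_{j,k}$, which exist only for $m\ge 2$. With these one shows that Weyl invariants lie in $(\Z/2[b]^{\otimes m})^{\si_m}$ modulo negligibles. One then checks that the total Stiefel--Whitney class restricts to $(1+b+c)^{\otimes m}\equiv(1+b)^{\otimes m}$. For $\alt_{4m+2}$ the paper instead uses a result of Adem--Milgram: $H^*(\si_{4m+2})\to H^*(\alt_{4m+2})$ is already surjective, being the quotient by the Euler class $w_1$.

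\textbf{The rest of the proposal is sound, with small corrections.}
\begin{itemize}
\item $\si_{2n}$: your ``cleaner route'' is exactly the paper's argument. Restriction to the detecting subgroup $\si_2^{\times n}$ lands in $\si_n$-invariants, which are polynomials in the restricted $w_i$. It needs no vanishing statement. The vanishing you flag as the main obstacle is misformulated. Non-$w$ classes need only agree with polynomials in the $w_i$ on order-two subgroups, not restrict to zero. Also, Dickson invariants of rank $\ge2$ need not vanish on lines: $b=x_1^2+x_1x_2+x_2^2$ restricts to $t^2$ on every line.
\item $\alt_4$: showing $c$ is not a multiple of $w_2$ proves necessity, not generation. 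Add that $H^1(\alt_4)=0$ and there is a single class of involutions. Then the image in $\F_2[t]$ is $\F_2[t^2,t^3]$, generated by $w_2\mapsto t^2$ and $c\mapsto t^3$.
\item Products: your K\"unneth argument is correct, and slightly more direct than the paper's use of a product of detecting subgroups.
\end{itemize}
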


The same results for odd indices follow, since with $\Z/2$ coefficients the cohomology
of $\si_{2n+1}$ and $\alt_{2n+1}$ are isomorphic to that of
$\si_{2n}$ and $\alt_{2n}$ respectively, and these isomorphisms
pass to essential cohomology.

 The relations between these classes follow from work of Serre 
within the context of cohomological invariants
of fields.  We build on and 
recast his results.

\begin{definition}
    Two homogeneous elements $a,b$ of a graded algebra are {\bf exchangeable} if $ab^n = a^mb$ for any positive $n,m$ for which these products are in the same degree.

    An {\bf exchange algebra} is a graded algebra for which there
    is a generating set
    with any pair of generators  exchangeable.
\end{definition}

In the appendix of \cite{Serre24}, Serre develops the case of 
exchange algebras with generators in the same degree -- that is,
those for which $a^2b = ab^2$. \

\begin{definition}
    A {\bf descendent} of a square-free monomial in an exchange algebra is a monomial with the same set of factors but where each factor can appear with any nonzero multiplicity.
\end{definition}

Descendents are uniquely determined
by the square-free monomial and the resulting degree,
which leads to the following.

\begin{proposition}\label{bound}
    The rank of an exchange algebra in each degree is
    bounded by the number of square-free monomials.
\end{proposition}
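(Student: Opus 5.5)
Fix a generating set $\{g_1,\dots,g_r\}$ of homogeneous generators, any two of which are exchangeable. The algebra is presumably commutative here (graded over $\Z/2$, as for cohomology rings). In each degree $d$ it is spanned by the monomials of degree $d$ in the generators. The plan is to show that every such monomial equals a canonical representative determined only by its support, i.e.\ its set of distinct factors. Then the number of distinct monomials in degree $d$ is at most the number of square-free monomials whose descendents can reach degree $d$, which is at most the total number of square-free monomials.

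The key step is the claim made just before the statement: a descendent is determined, as an element of the algebra, by its square-free monomial $g_{i_1}\cdots g_{i_k}$ and its degree. So let $m=\prod_j g_{i_j}^{e_j}$ and $m'=\prod_j g_{i_j}^{f_j}$ be two descendents of the same square-free monomial, both of degree $d$, with all $e_j,f_j\ge 1$. I will show $m=m'$ by induction on $\sum_j |e_j-f_j|$. If the exponents differ, the degree condition $\sum_j e_j|g_{i_j}|=\sum_j f_j|g_{i_j}|$ forces some index $s$ with $e_s>f_s$ and some $t$ with $e_t<f_t$. The natural move uses the exchange relation $ab^n=a^mb$ with $a=g_s$ and $b=g_t$. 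This replaces a factor $g_s^{m}g_t$ inside $m$ by $g_sg_t^{n}$ whenever $(m-1)|g_s|=(n-1)|g_t|$, and it is legitimate because every generator of the support appears at least once, so the remaining factors stay as a common cofactor.

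The main obstacle is that degrees need not be equal, so the exponent shifts come in fixed quanta. Let $L=\mathrm{lcm}(|g_s|,|g_t|)$. A single exchange moves $L/|g_s|$ copies of $g_s$ into $L/|g_t|$ copies of $g_t$, and it needs $e_s\ge 1+L/|g_s|$. It may not be possible to reduce the discrepancy between $s$ and $t$ directly. I would handle this by passing to a normal form rather than comparing $m$ and $m'$ pairwise. Order the generators in the support, and repeatedly push exponent from earlier generators to the last generator in the support whenever an exchange allows it, keeping each exponent at least $1$. Each move strictly decreases the earlier exponents, so this terminates. In the terminal state, each earlier exponent $e_j$ satisfies $1\le e_j\le L_j/|g_j|$, where $L_j=\mathrm{lcm}(|g_j|,|g_{\mathrm{last}}|)$. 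What must then be checked is that the terminal form is unique for a fixed degree.

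If uniqueness fails, I would weaken the count rather than fight it. It suffices to bound the rank by the number of square-free monomials, not to prove the full determination claim. I could show that the map from square-free monomials to spanning elements of degree $d$, which picks one terminal descendent per support (when one exists), hits every monomial up to the exchange relations, adjusting pairwise using exchanges among all pairs of support generators. Once each degree-$d$ monomial is identified with a descendent of one fixed reduced form per support, the spanning set in degree $d$ has size at most the number of square-free monomials, proving Proposition~\ref{bound}.
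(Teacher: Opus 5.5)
The gap is the step you flag and leave open: uniqueness of the terminal form, i.e.\ that a descendent is determined by its support and its degree. Your fallback paragraph does not avoid it, because ``one fixed reduced form per support hits every monomial'' is the same claim restated. It also cannot be proved from the exchange relations alone once generator degrees differ.

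Take generators $a,b,c$ of degrees $3,4,5$. The smallest nontrivial exchange relations are $ab^4=a^5b$, $ac^4=a^6c$ and $bc^5=b^6c$, so no exchange applies to $ab^3c$ or to $a^2bc^2$. Both have support $\{a,b,c\}$ and degree $20$, and both are terminal for your procedure (with $c$ last, your bounds are $e_a,e_b\le 5$). The free exchange algebra on $a,b,c$ is a polynomial ring modulo an ideal generated by differences of monomials, so two monomials agree there exactly when a chain of such substitutions links them. Hence these two are distinct, indeed linearly independent.

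The bound itself also fails in this generality. With degrees $9,10,19$ the smallest exchanges are $ab^{10}=a^{11}b$, $ac^{10}=a^{20}c$ and $bc^{11}=b^{20}c$. None of them applies to any of the nine monomials $a^jb^jc^{10-j}$, $1\le j\le 9$, all of degree $190$. So the rank in degree $190$ is at least $9$, while $a,b,c$ have only $8$ square-free monomials.

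The paper supports Proposition~\ref{bound} only by the sentence asserting that descendents are determined by square-free monomial and degree. Your outline is therefore the paper's, and both are missing an extra hypothesis. The one that covers every algebra the paper applies this to is injectivity into a product of copies of $\Z/2[t]$, as in the proof of Theorem~\ref{areexchange}. If $g_i\mapsto(\alpha_{C,i}t^{|g_i|})_C$ with $\alpha_{C,i}\in\Z/2$, then a monomial with support $S$ and degree $d$ maps to $(\prod_{i\in S}\alpha_{C,i}\,t^d)_C$. This depends only on $(S,d)$, so injectivity gives the uniqueness.

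Alternatively, your exchange-move argument does close in these cases:
\begin{enumerate}
\item all generators in the support have equal degree (Serre's case);
\item the support has two elements, since consecutive solutions of $e_s|g_s|+e_t|g_t|=d$ differ by a single exchange quantum;
\item the degrees are pairwise divisible, by pushing all exponent into the lowest-degree generator.
\end{enumerate}
Combining the last two handles the generators $w_{2^i}$ and $c$ that the paper actually uses.
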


Before defining additive bases through collections of square
free monomials and their descendents, we  further develop exchange algebras.

 Over $\Z/2$,  if $x$ and $y$ are exchangeable then $(x+y)^n = x^n + y^n$ for any $n$.
So, if $x$ and $y$ are in the same degree and
can each be exchanged with $z$, 
then so can $(x+y)$.  Choosing any basis for a vector
space and forming the free commutative algebra on that 
basis modulo exchange relations will thus yield an algebra unique up to canonical isomorphism.
In fact, there is a free exchange algebra functor
from the category of graded vector spaces over $\Z/2$ to 
the category of graded exchange algebras over $\Z/2$.

We also have the following.

\begin{proposition}\label{subalgebra}
    A homogeneously generated subalgebra of an exchange algebra over $\Z/2$ is an exchange algebra.
\end{proposition}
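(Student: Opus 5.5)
Let $A$ be an exchange algebra over $\Z/2$ with a generating set $S$ of pairwise exchangeable homogeneous elements. Let $B \subseteq A$ be a subalgebra generated by a set $T$ of homogeneous elements. We must produce a generating set of $B$ in which every pair is exchangeable. The natural choice is $T$ itself. So the plan is to show that \emph{any} two homogeneous elements of $A$ are exchangeable, or at least any two that are polynomials in the generators. This is stronger than needed, but it is the cleanest route.

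The key steps, in order, are as follows.

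First, extend exchangeability from generators to arbitrary monomials in $S$. Take a square-free support set $\{s_1,\dots,s_k\}$. By repeated exchange relations, any monomial with nonzero exponents on exactly these generators is determined by its degree; this is the descendent observation preceding Proposition~\ref{bound}. Now take two monomials $a$ and $b$ in the generators, with supports $P$ and $Q$. Both $ab^n$ and $a^mb$ have support $P\cup Q$. When they lie in the same degree they are descendents of the same square-free monomial in the same degree, so they are equal. Justifying that descendents are determined by degree uses only pairwise exchanges: one repeatedly moves multiplicity between two factors $s,t$ both present, via $s^{i}t^{j}= s^{i'}t^{j'}$ when $i+j$ weighted degrees agree. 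I would verify this reduces to the defining relation $st^n=s^mt$, after factoring out $s t$ and common powers. I expect this to be the main technical step: moving multiplicity between generators of different degrees requires care, since only degree-compatible trades are allowed. I would first treat a single pair $s,t$, showing $s^it^j$ with $i,j\ge1$ depends only on total degree, by induction on $i+j$ using $st^n=s^mt$ multiplied by powers. Then I would chain pairs together.

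Second, pass to sums. Following the remark before the statement, over $\Z/2$ exchangeability of $x$ and $y$ gives $(x+y)^n=x^n+y^n$. Suppose $x,y$ are in the same degree and each is exchangeable with every monomial. I would show that $x+y$ is also exchangeable with every monomial $z$, arguing that
\[
(x+y)z^n = xz^n+yz^n = x^mz+y^mz = (x+y)^m z.
\]
The last equality requires $(x+y)^m=x^m+y^m$, that is, that $x$ and $y$ are exchangeable with each other; for monomials this holds by the first step. By induction on the number of terms, every homogeneous element is exchangeable with every monomial. Then, by the same argument applied in the other variable, any two homogeneous elements are exchangeable; the Frobenius-type identity is needed on both sides, and it holds once pairwise exchangeability among the monomial summands is known.

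Finally, every element of $T$ is homogeneous, so any pair from $T$ is exchangeable. Hence $T$ is a generating set of $B$ of the required kind, and $B$ is an exchange algebra.
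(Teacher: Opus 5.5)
\textbf{There is a genuine gap, and it sits in Step 1.} Your route is the same as the paper's. Its proof writes $x=\sum a_i$, $y=\sum b_j$ with the $a_i,b_j$ products of generators and pushes powers through the sums. That tacitly assumes exactly your Step 1: monomials in pairwise exchangeable generators are themselves pairwise exchangeable. Your Step 2 is fine once that is granted, and your single-pair induction in Step 1 is correct. What fails is the phrase ``then chain pairs together.'' An exchange relation between generators of different degrees moves multiplicity only in blocks of fixed size, and only applies when the relevant exponents are large enough. So with three generators of different degrees the chaining can break down.

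Concretely, take the free exchange algebra over $\Z/2$ on $s,t,u$ of degrees $2,3,5$. Its nontrivial exchange relations are $st^3=s^4t$ in degree $11$, then $st^5=s^7t$ and $su^3=s^6u$ in degree $17$, then $tu^4=t^6u$ in degree $23$, and so on. The only degree-$4$ monomial is $s^2$, so the degree-$15$ part of the ideal of relations is spanned by $s^2(st^3+s^4t)=s^3t^3+s^6t$. Hence the two descendents $stu^2$ and $s^2t^2u$ of $stu$ are distinct, although they have the same degree. Equivalently, the degree-$5$ monomials $a=st$ and $b=u$ satisfy $ab^2\neq a^2b$, so they are not exchangeable.

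This gap cannot be repaired, because the proposition itself fails as stated. Let $B$ be the subalgebra generated by $a$ and $b$. It vanishes in degrees $1$ through $4$, and $B_5$ has basis $a,b$. So any homogeneous generating set of $B$ contains two distinct nonzero elements of $\{a,b,a+b\}$. Using $(a+b)^2=a^2+b^2$, exchangeability of any such pair reduces to $ab^2=a^2b$, which is false. The same example also undercuts the paper's remark before Proposition~\ref{bound} that descendents are determined by degree.

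What does survive is what the paper actually needs in Theorem~\ref{areexchange}. If the ambient algebra embeds in a finite product of copies of $\Z/2[t]$, every homogeneous element of degree $d$ has coordinates in $\{0,t^d\}$. Then \emph{any} two homogeneous elements are exchangeable, and every homogeneously generated subalgebra is trivially an exchange algebra. Your chaining argument is also valid when all generators lie in one degree (Serre's setting). There $s_i^2s_j=s_is_j^2$ moves one unit of multiplicity at a time between any two factors present. For just two generators, your single-pair induction already suffices.
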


\begin{proof}
    We check the exchange property for any two homogeneous elements, $x = \sum a_i$ and $y = \sum b_j$, where the $a_i$ and $b_j$
    are products of exchangeable generators:  
\begin{multline*}
x^n y^m = \left(\sum a_i\right)^n \left(\sum b_j\right)^m = \\
\left(\sum {a_i}^n\right)\left(\sum {b_j}^m\right) = \left(\sum {a_i}^m\right)\left(\sum {b_j}^n\right) = x^m y^n.
\end{multline*}

\end{proof}
\bigskip

Coproducts of exchange algebras are exchange algebras, and $\Z/2[t]$
is an exchange algebra, leading to the following.

\begin{theorem}\label{areexchange}
Let $G$ be a product
of symmetric and alternating groups.
Then $H^*_{ess}(G )$ is an exchange algebra.
\end{theorem}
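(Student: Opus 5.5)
We will embed $H^*_{ess}(G)$ into a product of polynomial rings $\Z/2[t]$ and then invoke Proposition~\ref{subalgebra}. By Theorem~\ref{equiv}, a class is negligible exactly when it restricts to zero on every order-two subgroup $C \subset G$. So the kernel of the product of restriction maps
\[
H^*(G) \longrightarrow \prod_{C} H^*(C) \cong \prod_{C} \Z/2[t_C],
\]
taken over all order-two subgroups (finitely many, or one per conjugacy class, since restriction is conjugation-invariant), is the negligible ideal. Hence $H^*_{ess}(G)$ injects as a graded algebra into $\prod_C \Z/2[t_C]$.

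The key steps, in order, are as follows.

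\begin{enumerate}
\item We check that $\Z/2[t]$ is an exchange algebra. With generating set $\{t\}$ this is immediate; in fact any two monomials $a = t^p$ and $b = t^q$ satisfy $ab^n = t^{p+qn}$ and $a^m b = t^{pm+q}$, which are equal whenever they lie in the same degree.
\item We note that a finite product (in graded algebras, the degreewise product) of exchange algebras is an exchange algebra. Here we take as generators the elements $(0,\dots,t_C,\dots,0)$, one for each $C$, together with the unit if needed. Two generators in different slots have product zero, so both sides of the exchange relation vanish. Two identical generators are trivially exchangeable.
\item We apply Proposition~\ref{subalgebra}. The image of $H^*_{ess}(G)$ is a homogeneously generated subalgebra, for instance generated by the images of all homogeneous classes, so it is an exchange algebra.
\end{enumerate}

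One subtlety is that Proposition~\ref{subalgebra} is phrased for a homogeneously generated subalgebra. Its proof only needs the ambient elements to be sums of products of exchangeable generators, and the images of homogeneous classes are homogeneous elements of the product. So the result applies directly, and it yields a generating set of the essential quotient in which any pair is exchangeable.

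The main point requiring care is the definition of the exchange relation when generators lie in different slots of the product. It is also the step I expect to need the most attention: one must confirm that ``exchangeable'' is meant via the equality $ab^n = a^m b$ in the algebra, which holds vacuously as $0=0$ there, and that Proposition~\ref{subalgebra}'s identity $(\sum a_i)^n = \sum a_i^n$ still holds. That identity needs pairwise exchangeability of all the summands' factors. In the product this follows because cross terms $a_ia_j$ for distinct slots vanish, and within a slot everything is a power of a single $t_C$, so the algebra is commutative and Frobenius-additive in characteristic two. Alternatively, one can avoid products entirely: exchange relations can be checked after each restriction to $\Z/2[t_C]$, where $x = \lambda t^p$ and $y = \mu t^q$ with $\lambda,\mu\in\Z/2$ make $xy^n = x^m y$ in matching degrees evident, and injectivity of the product of restrictions transfers these relations back to $H^*_{ess}(G)$.
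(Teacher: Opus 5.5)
Your proposal is correct and follows the paper's proof. The paper likewise combines the restrictions to all order-two subgroups into an injection of $H^*_{ess}(G)$ into a product of copies of $\Z/2[t]$ (which it calls a coproduct), and then applies Proposition~\ref{subalgebra}; your closing alternative, which checks $xy^n = x^m y$ after each restriction where homogeneous classes become $\lambda t^p$ with $\lambda \in \Z/2$, is a slightly more direct variant that avoids the bookkeeping about generators of the product.
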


\begin{proof}
        Let $\cal{C}$ be the set of order-two subgroups of $G$.
    Then by the order-two restriction criterion,
    the coproduct of restriction maps $H^*(G) \to \coprod_{C \in \cal{C}} H^*(C)$ passes to a map 
    $$H^*_{ess}(G) \to \coprod_{C \in \cal{C}} H^*(C) \cong \coprod_{C \in \cal{C}} \Z/2[t],$$ which is injective.  Because $H^*(G)$
    is homogeneously generated, so is $H^*_{ess}(G)$.  We apply
    Proposition~\ref{subalgebra}.
\end{proof}

Further relations between Stiefel-Whitney classes were  observed by Serre in the context of Witt rings.

\begin{definition}
    Call two natural numbers {\bf bitwise disjoint} if they share no places with 1’s
in their binary expansions.

Define the {\bf 2-part} of a finite set of natural
numbers to be the largest power of two which divides all
of them.
\end{definition}

\begin{theorem}\label{relations}
Let $G$ be a product
of symmetric and alternating groups.  In $H^*_{ess}(G )$
the following relations hold.
\begin{itemize}
    \item For any Stiefel-Whitney classes $w_i$ and $w_j$ pulled 
    back from the same factor,
    with $i$ and $j$ bitwise disjoint, $w_i \cdot w_j = w_{i+j}$.
    \item For classes $w_2$ and $c$ pulled back from any factor of $\alt_4$, we have $c^2 = {w_2}^3$.
\end{itemize}
Moreover, these relations and the exchange relations constitute
a complete set of relations.
\end{theorem}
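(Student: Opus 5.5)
Both relations will be checked by restricting to order-two subgroups, which is legitimate by the injectivity used in Theorem~\ref{areexchange}. Completeness will then follow by matching an upper bound for the presented algebra against a lower bound for the rank of $H^*_{ess}(G)$, the latter obtained by evaluating on many order-two subgroups.

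\textbf{Verifying the relations.} By the injectivity in the proof of Theorem~\ref{areexchange}, it suffices to check each relation after restricting to every order-two subgroup $C$. An involution in $\si_{2n}$ is a product of $k$ disjoint transpositions. The standard representation restricted to $C$ is then $k$ copies of the regular representation plus trivial summands, so the total Stiefel-Whitney class restricts to $(1+t)^k$ and $w_i \mapsto \binom{k}{i} t^i$. Lucas' theorem gives $\binom{k}{i}\binom{k}{j} \equiv \binom{k}{i+j} \pmod 2$ whenever $i,j$ are bitwise disjoint, since the binary digits of $i+j$ are the union of those of $i$ and $j$. Both sides of the first relation therefore agree on every $C$. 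For a product $G$, an order-two subgroup projects to each factor either trivially or onto an order-two subgroup, so the same check applies factor by factor. For $\alt_4$, the only involutions are the double transpositions, all conjugate, so it is enough to evaluate on one $C$. There $w_2 \mapsto t^2$, and $c$ restricts to $t^3$ (or to $0$, in which case $c$ would be negligible and could be dropped). Either way $c^2 = w_2^3$ holds on $C$.

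\textbf{Completeness.} Let $A$ be the free commutative algebra on the generators of Theorem~\ref{main}, modulo the exchange relations and the relations above. There is a surjection $A \to H^*_{ess}(G)$. Using the bitwise-disjoint relation, every monomial in the $w_i$ of one factor reduces to a product $\prod w_{2^{a}}^{e_a}$. So $A$ is spanned by descendents of square-free monomials in the classes $w_{2^a}$, one family for each factor, together with $c$ (where $c^2$ is eliminated). Proposition~\ref{bound} then bounds the rank of $A$ in each degree by the number of such descendents.

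For the lower bound, I will show that these descendents map to linearly independent elements of $\coprod_C \Z/2[t]$. Restriction to the subgroup generated by a product of $k$ transpositions sends a descendent $\prod w_{2^a}^{e_a}$ to $t^d$ exactly when every $2^a$ appearing is a binary digit of $k$. Distinct square-free supports are therefore separated by choosing $k$ with prescribed binary digits, with $k \le n$ (and $k$ even for $\alt_{2n}$). The resulting incidence matrix is triangular with respect to inclusion of digit sets, so it is invertible. For products, take $C$ diagonal with prescribed $k$ in each factor, where $k=0$ means the projection is trivial; this is a product of the one-factor incidence matrices. The factor $\alt_4$ is handled by hand.

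The main obstacle is this triangularity argument. It needs care for $\alt_{2n}$, where only even $k$ occur and only $w_{2j}$ are generators, and for $\alt_4$ with the extra class $c$. I would first try ordering square-free supports by inclusion of binary digit sets and evaluating at $k$ equal to the sum of the digits in the support.
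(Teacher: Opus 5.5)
Your route is the paper's: check the relations on order-two subgroups via $w_i\mapsto\binom{k}{i}t^i$ and Lucas's theorem, reduce to the generators $w_{2^a}$, bound the rank by counting square-free descendents, and separate supports with a unitriangular evaluation matrix. Your ordering by inclusion of digit sets, evaluated at $k=\sum_{a\in S}2^a$, is the paper's observation that $\Psi(w_j)$ begins with $j-1$ zeros. For $\alt_{2n}$ the parity worry disappears, since supports avoid $a=0$ and so $k_S$ is even.

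Two repairs are needed even before $\alt_4$ enters.
\begin{itemize}
\item Every involution moves at most $n$ pairs, so $w_m=0$ in $H^*_{ess}$ for $m>n$. You must read $w_iw_j=w_{i+j}$ as $w_iw_j=0$ when $i+j>n$. Otherwise supports such as $w_2w_4$ in $\si_{10}$ survive in your $A$ but cannot be detected by any $k\le n$. The paper uses this tacitly by indexing its basis by $w_j$ with $j\le n$.
\item Your ``either way'' for $c$ is wrong. If $c$ restricted to $0$, then $c^2\mapsto 0\neq t^6$. In fact $c$ restricts to $c_+=x_1^3+x_1^2x_2+x_2^3$ on $V_2$, which is $t^3$ on all three involutions, so the relation does hold.
\end{itemize}

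The genuine gap is the step you defer, ``the factor $\alt_4$ is handled by hand.'' It cannot be handled, because the completeness claim is false once $\alt_4$ is a factor of a nontrivial product. Take $G=\alt_4\times\si_2$ with generators $w_2,c$ and $x=w_1$. The three classes of order-two subgroups are generated by $(g,1)$, $(1,\sigma)$, $(g,\sigma)$. Both $cx$ and $w_2x^2$ restrict to $(0,0,t^4)$, so $cx=w_2x^2$ in $H^4_{ess}(G)$, which has rank $3$ with basis $x^4,w_2^2,w_2x^2$.

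Your $A$, however, has no relations in degree $\le 4$. The first ones are:
\begin{itemize}
\item $w_2x^3=w_2^2x$ in degree $5$;
\item $c^2=w_2^3$ in degree $6$;
\item $cx^4=c^2x$ in degree $7$;
\item the $c$--$w_2$ exchanges, starting in degree $11$.
\end{itemize}
No pair of homogeneous elements gives a nontrivial exchange relation in degree $4$, so $\dim A_4=4>3$. Likewise $w_2c'=cw_2'$ holds in $H^5_{ess}(\alt_4\times\alt_4)$ but not in $A$.

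The reason is that $\alt_4$ has a single class of involutions, so $w_2$ and $c$ have the same support. In a product, a monomial with nontrivial $\alt_4$-part is therefore determined in $H^*_{ess}$ by its degree and its supports in the other factors. Exchange relations among generators together with $c^2=w_2^3$ do not capture this.

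The paper's proof breaks at the same place: its claim that the tensor product of pairings has full rank, and correspondingly the separate entries $c$ and $w_2c$ in Theorem~\ref{basis}. With the first repair, your argument does prove completeness for products of $\si_{2n}$'s and $\alt_{2n}$'s with $n\ge 3$, and for $\alt_4$ alone. With $\alt_4$ factors present, one must add further relations of the form $c\,y=w_2\,y'$, where $y,y'$ are monomials in the other factors with the same support and $\deg y'=\deg y+1$; the simplest example is $cx=w_2x^2$.
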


The Stiefel-Whitney calculation follows from Equation 1.2.2 of \cite{Serre24}, which in turn cites work of Milnor \cite{miln70}.
We give an alternate proof below, using ideas of Serre in the group
cohomology context.
The relation on $\alt_4$ is elementary.  The result
that these relations are complete is new, though our methods
 also borrow from Serre.


An additive basis for $H^*_{ess}(G )$ follows from Theorems~\ref{main} and \ref{relations}, 
applying the fact that in an exchange algebra a monomial is completely determined by its constituent generators and total degree.  
We establish such a basis in the course of proving Theorem~\ref{relations} and record it here.

\begin{theorem}\label{basis}
Let $G$ be a product $\prod G_i$, where each $G_i$ is either
a symmetric or an alternating group. An additive basis for $H^*_{ess}(G)$ is given by the descendents of square-free
monomials $m$ which are products of:
\begin{itemize}
    \item a Stiefel-Whitney class $w_j$ with $0 \leq i \leq n$ pulled back from each factor $G_i = \si_{2n}$;
    \item a Stiefel-Whitney class $w_{2j}$ with $0 \leq 2j \leq m$ pulled back from each $G_i = \alt_{2m}$ with $m>2$;
    \item if $G_i = \alt_4$ then  the pull-back of either $1$, $w_2$, $c$ or $w_2 c$.

    \end{itemize}
The degree of a descendent differs from that of $m$ 
by  a multiple of the $2$-part 
of the degrees
of the constituent Stiefel-Whitney classes.
\end{theorem}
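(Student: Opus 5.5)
Since $H^*_{ess}(G)$ is an exchange algebra (Theorem~\ref{areexchange}) generated by the classes of Theorem~\ref{main}, the proof has two halves: a spanning argument and an independence argument.

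\textbf{Spanning.} Every monomial in the generators can be rewritten as a descendent of one of the listed square-free monomials. Within a single factor $\si_{2n}$, a product $w_iw_j$ is handled as follows.
\begin{itemize}
\item When $i,j$ are bitwise disjoint, Theorem~\ref{relations} gives $w_iw_j=w_{i+j}$.
\item Otherwise, first use the exchange relations to replace $w_iw_j$ by $w_i^aw_j^b$ in the same degree, then recombine bitwise disjoint pieces.
\end{itemize}
The aim is to show that any monomial in the $w$'s of one factor equals either zero or a descendent of a single $w_k$. For this I will lean on the description of $H^*_{ess}$ as a subalgebra of $\coprod_C \Z/2[t]$. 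For an involution $C$ with $r$ transpositions, $w$ restricts to $(1+t)^r$, so $w_i\mapsto \binom{r}{i}t^i$. Hence $\prod w_{i_s}^{a_s}$ restricts to $\prod_s \binom{r}{i_s}\, t^{\deg}$. By Lucas's theorem this is nonzero exactly when $r$ dominates every $i_s$ bitwise, i.e.\ when $r$ dominates $k=\bigvee i_s$ (bitwise OR). The same condition governs a suitable power of $w_k$ in the same degree. That degree exists exactly when $\gcd$ considerations allow, which is where the 2-part statement enters.

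The same argument applies to $\alt_{2m}$ (even $r$, classes $w_{2j}$) and to $\alt_4$ (subgroups with $r=2$, where $c\mapsto t^3$). Across factors the exchange relations merge everything into a descendent of one square-free product. In the product case I use that order-two subgroups of $\prod G_i$ are diagonal-type subgroups generated by tuples of commuting involutions. The restriction to such a $C$ is then a product of the factor restrictions, so all monomials behave multiplicatively.

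\textbf{Independence.} Fix a degree $d$. I will evaluate the candidate basis elements on carefully chosen order-two subgroups and show the resulting matrix is triangular. Order square-free monomials $m=\prod w_{k_i}$ by the tuple $(k_i)$ under bitwise dominance. Take the involution $C_m$ whose $i$th component has exactly $k_i$ transpositions, using identity components where $k_i=0$. A descendent of $m'$ restricts nontrivially to $C_m$ iff each $k'_i$ is bitwise dominated by $k_i$. So the matrix (descendents of degree $d$) $\times$ (subgroups $C_m$) is unitriangular with respect to dominance, giving independence. For $\alt_{2m}$ the component $k_i$ must be even, which is why only even indices appear. For $\alt_4$ I use $C$ generated by a double transposition to detect $w_2$ and $c$. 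The distinction between $w_2c$ and other classes of the same degree needs care: since only one conjugacy class of involutions exists there, I will separate $w_2^a$ and its exchanges with $c$ by degree parity and by the relation $c^2=w_2^3$.

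\textbf{Degree claim.} A descendent $\prod x_i^{a_i}$ of $m=\prod x_i$ has degree $\deg m+\sum (a_i-1)\deg x_i$. Every integer combination of the $\deg x_i$ arises for suitable exponents, so the possible degrees above $\deg m$ are exactly those congruent to $\deg m$ modulo the 2-part up to large degrees. Uniqueness of a descendent in a given degree follows from the exchange relations, as in Proposition~\ref{bound}.

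\textbf{Main obstacle.} I expect the hardest step to be the $\alt_4$ case and the precise form of the degree statement. The 2-part versus gcd bookkeeping must be checked: the gcd of the degrees may exceed the 2-part, but over $\Z/2$ the images in $\Z/2[t]$ only see degrees. I would try first to verify the claim directly in the restriction picture.
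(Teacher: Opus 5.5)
Your independence step is essentially the paper's. You restrict to the involution with $k$ transpositions (componentwise in a product), use Lucas to see that a descendent of $m'$ survives exactly when $k'\preceq k$ bitwise, and conclude by triangularity. Ordering by dominance is a clean way to say what the paper expresses as ``$\Psi(w_j)$ starts with $j-1$ zeros'' together with its tensor-product-of-pairings remark, and it works for factors $\si_{2n}$ and $\alt_{2m}$ with $m>2$.

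The genuine gap is in spanning and in the degree claim. You match a monomial with OR-index $k$ against ``a suitable power of $w_k$'', which usually does not exist: in $\si_6$, $w_1w_3\neq 0$ has degree $4$ and $k=3$. You then leave existence to ``gcd considerations'' and to a statement valid only ``up to large degrees''. The missing idea is the one the paper uses. The relation $w_iw_j=w_{i+j}$ for bitwise disjoint $i,j$, which your own Lucas computation yields, makes the $w_{2^i}$ exchange generators. So $w_k$ \emph{is} the square-free monomial $\prod_{2^i\in k}w_{2^i}$, and its descendents are the $\prod w_{2^i}^{b_i}$ with all $b_i\geq 1$. These constituent degrees are powers of two, so the smallest one, $2^v$, divides all the others. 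Hence descendents of $w_k$ exist in exactly the degrees $d\geq k$ with $d\equiv k \pmod{2^v}$, with no stable range and no gcd-versus-2-part discrepancy. Any monomial $\prod w_{i_s}^{a_s}$ with $\bigvee i_s=k$ has degree at least $k$ and divisible by $2^v$, so the required descendent always exists (e.g.\ $w_1w_3=w_1^2w_2$). Only then does your restriction comparison give the equality, and the same reasoning works across factors.

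For $\alt_4$ the plan cannot be completed as stated, and the problem is more than missing detail. The classes $w_2$, $c$ and $w_2c$ restrict nontrivially to exactly the same order-two subgroups. So no choice of $C_m$ separates them, and your unitriangular matrix does not exist once an $\alt_4$ factor is present. In fact the literal list is dependent. In $\alt_4$, the listed descendents $c^2$ of $c$ and $w_2^3$ of $w_2$ coincide in degree $6$. In $\alt_4\times\alt_4$, the pull-backs $w_2'c''$ and $c'w_2''$ (primes marking the factor) both restrict to $0,0,t^5$ on the three conjugacy classes of order-two subgroups, hence are equal in $H^5_{ess}$.

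What the paper actually proves for a single $\alt_4$ is what your parity remark points toward. Using $c^2=w_2^3$, there is at most one monomial in each degree, $w_2^a$ or $w_2^{a-1}c$, and each restricts to $t^d\neq 0$. For products with $\alt_4$ factors, an $\alt_4$-component of an involution only detects whether that factor's part is trivial. A correct count must therefore treat each such factor as contributing ``trivial'' or ``nontrivial'', not the four monomials $1,w_2,c,w_2c$. The paper's tensor-product-of-pairings sentence passes over this, so do not expect to prove that case as written.
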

 
Thus, for example, 
the rank of $H^i_{ess}(\si_{2n})$ is less than
or equal to $n$ for all $i$.

 The structure of negligible ideals is also of interest.  In
 further work in progress, we show these are relatively simple to describe using
 the Hopf ring approach to the 
 cohomology of symmetric and alternating groups. While the subject of
cohomology of symmetric groups dates back to the 1950's,
generators and relations
for ring structure of $H^*(\si_n )$ were not
determined until relatively recent work of Feshbach  \cite{Fesh02}. Those presentations are complicated.
Later, Giusti, Salvatore and the second author found in \cite{GSS12} that by using
a second product, which is the analogue in cohomology 
of  external tensor product followed
by induction in representation theory, the presentation simplifies
substantially.  The resulting Hopf ring structure, incorporating 
both products, was crucial
in understanding cohomology of alternating groups in \cite{GiSi21}, which
previously had not even been determined additively.  

Our
first proofs of Theorem~\ref{main} used this framework.
They were shorter, in a sense, but relied on the full force of \cite{GSS12, GiSi21}.
The present proofs only employ the
time-honored method of restricting to the cohomology
of elementary abelian subgroups,
as well as the theory of characteristic classes.

We thank J-P. Serre for bringing these questions to our attention as well as 
for answering our questions about his work.  It has
been inspiring to communicate about active research mathematics
with him in his hundredth year (soon to be $101^{st}$).
We thank Eva Bayer-Fluckiger for careful readings of and many helpful comments 
about this work.

\section{Proofs}

Our strategy for proofs of Theorems~\ref{main}, \ref{relations} and
\ref{basis} is to make calculations of suitable
restrictions to the cohomology of elementary abelian
subgroups.  

\begin{definition}
An elementary abelian $2$-subgroup $E \subset G$ is
{\bf detecting} if any order-two subgroup of $G$ is conjugate
to a subgroup of $E$.  
\end{definition}

If $G$ is a product of symmetric and alternating groups
and $E \subseteq G$ is detecting, then $H^*_{ess}(G ) \to H^*_{ess}(E )$ is injective.
This is immediate from the order-two restriction property, since restricting to zero on all
order-two subgroups of $E$ will imply such for all
order-two subgroups of $G$.

We recall a standard starting point in group cohomology.
First, $G$  acts by conjugation on its classifying space, or equivalently its bar resolution, 
but the resulting action on cohomology
is trivial (\cite{AdMi94} II.2 Lemma 3.1).  Similarly, if $H \subset G$ is a subgroup, 
the normalizer $N_H$ of $H$  acts on the cohomology of
$H$.  This action can be nontrivial, though $H \subset N_H$
 acts trivially.  As is standard in group cohomology (but 
unfortunately is in conflict with usage elsewhere), 
we call the quotient
$N_H / H$ the Weyl group $W_H$. With any constant coeffients
restriction factors through Weyl invariants:
$$res: H^*(G) \to H^*(H)^{W_H} \subset H^*(H).$$

We now consider the first part of Theorem~\ref{main}, which
follows from a well-known calculation.

\begin{definition}
    Let $E_{sym}$ denote the subgroup generated by $(1 \; 2), (3 \; 4), \ldots, (2n - 1 \; 2n)$ in $\si_{2n}$. 
\end{definition}

\begin{proof}[{\bf Proof of first part of Theorem~\ref{main}}]
Because any order-two permutation is a product of
disjoint two-cycles, the subgroup $E_{sym}$ is detecting.
Its Weyl group as a subgroup
of $\si_{2n}$ is $\si_n$ (see \cite{AdMi94} VI.3),  
so the restriction map on cohomology
has image in the ring of symmetric
polynomials.  We show that it constitutes that
entire ring by showing that Stiefel-Whitney classes
restrict to elementary symmetric functions, a result
proven, for example, in \cite{MaMi79}.  For completeness we
give the argument here.

By definition,
Stiefel-Whitney classes are associated to the vector bundle  $V_{std}$ induced by the standard representation of $\si_{2n}$, 
 which acts on a basis $\{e_i\}$
by permutation of indices. 
  The pull-back of $V_{std}$ to $E_{sym}$ is $\bigoplus_i (\tau_i + \R)$, where $\tau_i$ denotes the sign representation of the $i$th factor of $C_2 \subset E_{sym}$ 
  and $\R$ denotes the trivial bundle.  Indeed, we can identify $\tau_i$ as the pullback of the span of 
$e_{2i -1} - e_{2i}$ and the $i$th trivial bundle as the pullback of
the span of 
$e_{2i -1} + e_{2i}$.  The Whitney sum formula now shows that $w_i$ maps to the $i$th symmetric polynomial, which
establishes the result.

We also deduce one case of second part of Theorem~\ref{main},
namely that of $\alt_{4n+2}$.  This immediately follows, 
since its
cohomology is the quotient of that of
$\si_{4n+2}$ modulo the Euler class associated to the two-sheeted
covering map which relates their classifying spaces (see Corollary VI.6.2 of \cite{AdMi94}).
This Euler class is also the first Stiefel-Whitney class.
Because the map in cohomology is surjective, so is
the resulting 
well-defined map on order-two essential quotients,
which implies that Stiefel Whitney classes generate
$H^*_{ess}(\alt_{4n+2} ).$
\end{proof}

Our argument to show Stiefel-Whitney classes generate
essential cohomology of $\alt_{4n}$ is similar in outline, though with
an added complication that we need to use relations
in the essential quotient of our chosen detecting subgroup.  We thus
recall the following result.

\begin{theorem}[Bayer-Fluckger and Serre]\label{elemabelian}
Let $E$ be an elementary abelian $2$-group.  Then $H^*_{ess}(E )$ is the free exchange algebra on $H^1(E )$. 
\end{theorem}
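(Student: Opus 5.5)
We prove that $H^*_{ess}(E)$, for $E \cong (\Z/2)^r$, is the free exchange algebra on $H^1(E)$. Here $H^*(E) = \Z/2[x_1,\dots,x_r]$. Every order-two subgroup $C \subset E$ is the image of a nonzero $v \in E$. Restriction to $C$ sends a polynomial $f$ to $f(v)\,t^{\deg f}$, where $f(v)$ means evaluating $f$ at the point $v \in (\Z/2)^r$. So a homogeneous $f$ is negligible exactly when it vanishes as a function on $E \setminus \{0\}$. In positive degree it then vanishes on all of $E$, because $f(0)=0$ for $\deg f > 0$. Thus the negligible ideal in degree $d>0$ consists of the homogeneous degree-$d$ polynomials vanishing on $\F_2^r$. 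Equivalently, $H^d_{ess}(E)$ is the space of functions on $\F_2^r \setminus\{0\}$ obtained by evaluating degree-$d$ polynomials.

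Let $F$ be the free exchange algebra on $H^1(E)$, with generators $x_1,\dots,x_r$. The exchange relations hold in $H^*_{ess}(E)$ by Theorem~\ref{areexchange}, or directly, since $x_i x_j^n$ and $x_i^m x_j$ agree as functions whenever $n,m \geq 1$. Also $H^*_{ess}(E)$ is generated by $H^1$. Hence there is a surjection $F \to H^*_{ess}(E)$, and it remains to prove injectivity by comparing ranks degree by degree.

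For the upper bound, Proposition~\ref{bound} gives a spanning set of $F$ in degree $d$. It consists of one descendent for each nonempty subset $S \subseteq \{1,\dots,r\}$ with $|S| \leq d$; the empty set occurs only when $d=0$. A descendent of $x_S = \prod_{i\in S} x_i$ with total degree $d \geq |S|$ exists and is unique in $F$ by the exchange relations. So $\dim F^d \leq \#\{S : 1\le |S|\le d\}$. For the lower bound, note that as a function on $\F_2^r$ any descendent of $x_S$ equals the monomial function $\prod_{i \in S} x_i$. The square-free monomial functions are linearly independent, since they form the standard basis of Boolean functions. Hence the images of the spanning set in $H^d_{ess}(E)$ are linearly independent. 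It follows that the spanning set is a basis of $F^d$ and that $F^d \to H^d_{ess}(E)$ is an isomorphism.

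One point needs care. We must check that the free exchange algebra on $H^1(E)$ is independent of the chosen basis, so that using the coordinate basis $x_i$ is legitimate. The paper already notes this canonicity: sums of exchangeable degree-one elements are exchangeable. We also must make sure the argument uses only this generating set and no relations beyond the exchange relations. The main obstacle is then purely the linear independence step, and that is handled by the standard fact that square-free monomials give a basis of functions on $\F_2^r$.
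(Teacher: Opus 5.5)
Your proof is correct and is essentially the paper's argument. The paper's restriction maps $\phi_S$ are exactly your evaluations at the indicator vector of $S$, and its induction over subsets $T\subseteq S$ is precisely the proof that square-free monomials are linearly independent as functions on $\F_2^r$, which you quote as a standard fact; the only difference is packaging, since you phrase the conclusion as a rank comparison, whereas the paper directly shows that any homogeneous element killed by all $\phi_S$ is zero modulo the exchange relations.
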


This theorem was stated in the context of cohomological
invariants in Proposition~7.2.4 of \cite{B-FS94}. We give a 
proof using  the order-two restriction property.

\begin{proof} 
The order-two subgroups of $E$ correspond to non-identity elements of $E$ and the restriction on $H^1(E ) \cong {\rm Hom}(E )$ is given by the restriction of homomorphisms. Thus, the  negligible ideal is the intersection
of kernels of all homomorphisms $\phi_S : H^*(E ) \cong \Z/2[x_1, \ldots, x_n] \to \Z/2[t]$, naturally indexed by the set of variables $S$ which
map to $t$.   Since ${x_i}^2 x_j + x_i {x_j}^2 = x_i \cdot x_j \cdot (x_i + x_j),$ it will be zero if either $x_i$ maps to zero, $x_j$ maps to zero, or if they both map to $t$.  Thus,  exchange relations among
the $x_i$ are in 
the kernels of all $\phi_S$.

To show the converse inclusion, 
we show that any homogeneous
polynomial $f$ in the kernel of all $\phi_S$ 
is equivalent to 
zero modulo the exchange relations.  
Modulo these, any monomial is equivalent to a monomial
in which all but one of the variables 
are raised to the first
power.  Thus all monomials which are a product of
positive powers of generators with labels in
$ S = \{ i_1, \ldots, i_k \} $ are equivalent to a multiple $m_S$ of
${x_{i_1}}^{p-k+1} x_{i_2} \cdots x_{i_k}$.  We show inductively
that this multiple must be zero.  If $S = \{ i \}$ 
then $\phi_{\{i\}}$  sends $f$ to $m_{\{i\}} t^p$, and so would have zero image only if $m_{\{i\}} = 0$.  If inductively we have 
shown that $m_T$ is zero for $T \subset S$ then the image of $f$ under $\phi_S$
is $m_S t^p$, showing $m_S = 0$.
\end{proof}


We now proceed to define a detecting subgroup, understand its
Weyl group, and then show that Stiefel-Whitney
classes exhaust the Weyl invariants.

\begin{definition}
    Let $V_2 \subset \alt_4$ be the subgroup given by the identity element and $(12)(34), (13)(24)$ and $(14)(23)$.  Define $E_{alt}$ as  ${V_2}^{\times n} = V_2 \times \cdots \times V_2 \subset \alt_{4n}$,  
 the subgroup given by the composite 
 with the standard inclusion of $\prod \alt_4 \subset \alt_{4n}$. 
\end{definition}

To understand the normalizer and Weyl groups of $E_{alt}$, we first consider
the symmetric group setting.  

\begin{definition}
    The action of $GL_2( \F_2)$ 
    on ${ \F_2}^2$, along with a fixed choice of isomorphism 
    between ${ \F_2}^2$ and $\{1, 2, 3, 4\}$,
    defines an embedding $GL_2( \F_2) \subset \si_4$.

     Embed  ${GL_2( \F_2)}^n$ in $\si_{4n}$ through the composite of the $n$-fold product of these embeddings  with the standard inclusion ${\si_4}^{n}$ $ \subset \si_{4n}$. 
     
    Embed
    $\si_n \subset \si_{4n}$ through order-preserving
    permutation of the subsets $\{4i+1, \cdots, 4i + 4\}$. 
    
    Define $W_n^{sym}$ to be the subgroup generated by these two subgroups.

\end{definition}

As stated in 
Section 3 of \cite{MaMi79} and
Section VI.4 of \cite{AdMi94},
    the Weyl group of ${V_2}^{\times n} \subset \si_{4n}$ is isomorphic to $W_n^{sym}$, which moreover is isomorphic to $\si_n \wr GL_2( \F_2)$.
From this the corresponding result for alternating groups  is immediate.

\begin{definition}
    Define $W_n$ to be the intersection of $W_n^{sym}$ with $\alt_{4n}$,
    or equivalently the kernel of the sign homomorphism restricted to
    $W_n^{sym}$.
\end{definition}

 By construction, $W_n$ normalizes $E_{alt}$ in $\si_{4n}$ and 
thus $\alt_{4n}$.  Moreover, this normalizer can have at most index two in
$W_n^{sym}$, as $W_n$ does.  We thus have the following.

\begin{corollary}
    The Weyl group of $E_{alt}$ in $\alt_{4n}$ is $W_n$.
\end{corollary}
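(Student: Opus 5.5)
The plan is to show two inclusions: $W_n \subseteq N_{\alt_{4n}}(E_{alt})/E_{alt}$, and that this normalizer quotient has index at most two in $W_n^{sym}$, hence equals $W_n$.

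\emph{Step 1: $W_n$ normalizes.} Every element of $W_n^{sym}$ normalizes $E_{alt} = V_2^{\times n}$ in $\si_{4n}$. Indeed, $GL_2(\F_2)\subset \si_4$ acts on $\F_2^2 \cong \{1,2,3,4\}$. The translations of $\F_2^2$ are exactly $V_2$, and linear maps normalize the translation group. The block permutations in $\si_n$ permute the factors of $V_2^{\times n}$. Since $W_n = W_n^{sym}\cap \alt_{4n}$, every element of $W_n$ normalizes $E_{alt}$ inside $\alt_{4n}$.

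\emph{Step 2: identify the normalizer quotient.} Set
\[
N = N_{\alt_{4n}}(E_{alt}) = N_{\si_{4n}}(E_{alt}) \cap \alt_{4n}.
\]
The cited identification from \cite{MaMi79, AdMi94} gives $N_{\si_{4n}}(E_{alt})$ with Weyl group $W_n^{sym}$. Realized concretely, $N_{\si_{4n}}(E_{alt}) = E_{alt}\rtimes W_n^{sym}$, which equals $V_2^{\times n} \rtimes (GL_2(\F_2)^n \rtimes \si_n)$, that is, $\si_4 \wr \si_n$ as a subgroup of $\si_{4n}$. Since $E_{alt}\subset \alt_{4n}$, intersecting with $\alt_{4n}$ gives
\[
N = E_{alt}\rtimes (W_n^{sym}\cap \alt_{4n}) = E_{alt}\rtimes W_n.
\]
Hence $N/E_{alt}\cong W_n$.

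The main point needing care is this identification. The cited result describes the Weyl group abstractly, but the argument needs that the particular subgroup $W_n^{sym}$ is a complement to $E_{alt}$ in the normalizer. That holds because $W_n^{sym}\cap E_{alt}$ is trivial (elements of $GL_2$ fix $0$, while nontrivial elements of $V_2$ fix nothing) and the orders match. Once we know it, parity can be read off on $W_n^{sym}$ directly.

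\emph{Step 3: the index-two remark in the text.} Alternatively, as the text suggests, we can argue by orders. The Weyl group $N/E_{alt}$ is the kernel of the sign map on $W_n^{sym}$, and it contains $W_n$. The sign map is nontrivial on $W_n^{sym}$, since a transposition in $GL_2(\F_2)\cong \si_3$ is odd. So $N/E_{alt}$ has index exactly two, and being a subgroup of index at most two containing $W_n$, it equals $W_n$.
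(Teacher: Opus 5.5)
Your proposal is correct and follows essentially the paper's argument: $W_n$ normalizes $E_{alt}$, and the Weyl group of $E_{alt}$ in $\alt_{4n}$ is the even part of $W_n^{sym}$, which has index at most two. Beyond the paper, you make explicit that $W_n^{sym}$ is an actual complement to $E_{alt}$ in $N_{\si_{4n}}(E_{alt}) = \si_4 \wr \si_n$, and that, since $E_{alt}$ consists of even permutations, parity can be read off on the $W_n^{sym}$ factor; this is what rules out the normalizer quotient being all of $W_n^{sym}$ (so your Step 3 is redundant once Step 2 is in place).
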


Recall  that $GL_2( \F_2) \cong C_3 \rtimes C_2$ and $GL_2( \F_2) \cap \alt_4 
 = W_1$ is cyclic of order three, generated by  $\begin{bmatrix}
        0 & 1 \\ 1 & 1 \\
    \end{bmatrix}$.  
In coordinates, this generator of $W_1$ acts on $H^*(V_2) \cong \Z/2[x_1, x_2]$ by sending $x_1 \mapsto x_2$ and $x_2 \mapsto x_1 + x_2$.

The following is Theorem III.1.3 of \cite{AdMi94}.

\begin{theorem}\label{admi}
The invariants of $\Z/2[x_1, x_2]$ under the action of  $W_1$  are generated by a degree two class $$b = {x_1}^2 + x_1x_2 + {x_2}^2$$ and the degree three classes 
$$c_+ = {x_1}^3+{x_1}^2x_2+{x_2}^3 \;\; {\rm and} \;\; c_- = {x_1}^3+x_1{x_2}^2+{x_2}^3.$$  
\end{theorem}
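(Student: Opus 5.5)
The plan is to compute the invariant ring of $\Z/2[x_1,x_2]$ under the order-three generator $g\colon x_1\mapsto x_2,\ x_2\mapsto x_1+x_2$ by splitting off the Dickson invariants and then identifying the remaining invariants by a Hilbert series count. First I would check directly that $b$, $c_+$ and $c_-$ are invariant. For $b$ this is a one-line substitution: $x_2^2 + x_2(x_1+x_2) + (x_1+x_2)^2 = x_1^2+x_1x_2+x_2^2$. For $c_\pm$, a similarly short expansion shows that $g$ fixes each of them. It is also worth recording that $c_+ + c_- = x_1x_2(x_1+x_2)$, which is the Dickson invariant $d_3$ of degree three. Moreover $b$ is the Dickson invariant $d_2$ of degree two, since $x_1^2+x_1x_2+x_2^2$ is the sum of the products of pairs of the three nonzero linear forms.

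Next I would use that the full $GL_2(\F_2)$-invariants form the Dickson algebra $\Z/2[d_2,d_3]$, which is polynomial, with Poincaré series $1/((1-t^2)(1-t^3))$. Since $W_1$ has index two and the order $3$ is odd, $\Z/2[x_1,x_2]$ is a free $\Z/2[d_2,d_3]$-module of rank $6$. The $W_1$-invariants form a free module of rank $2$ over the Dickson algebra, and the Poincaré series of the invariant ring can be read off from Molien's formula. Molien applies because $3$ is invertible in $\Z/2$; one either works over $\F_4$, where $g$ has eigenvalues $\omega,\omega^2$, or counts by characters. The result is
$$\frac{1}{3}\left(\frac{1}{(1-t)^2} + \frac{2}{1+t+t^2}\right) = \frac{1+t^3}{(1-t^2)(1-t^3)}.$$
Hence the $W_1$-invariants form a free $\Z/2[b,d_3]$-module on the basis $1$ and one further class of degree three, which can be taken to be $c_+$, the single invariant in degree three not equal to $d_3$.

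It remains to show that $1$ and $c_+$ are linearly independent over $\Z/2[b,d_3]$ and span the invariants. Independence follows because $c_+$ is not $GL_2$-invariant: the transposition swapping $x_1$ and $x_2$ sends $c_+$ to $c_-$. If $p + q\,c_+ = 0$ with $p,q$ Dickson, then applying the transposition gives $q(c_+ + c_-) = q\,d_3 = 0$, so $q=0$ and then $p=0$. Since the submodule $\Z/2[b,d_3]\{1,c_+\}$ has the same Poincaré series as the full invariant ring, they are equal. The invariants are therefore generated by $b$, $c_+$ and $d_3 = c_+ + c_-$, that is, by $b$, $c_+$ and $c_-$.

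The only step with real content is the Poincaré series computation; I would verify it by a Molien computation over $\F_4$. As an alternative, one can count the invariant monomial combinations in low degrees and then appeal to the freeness supplied by transfer, since $3$ is odd.
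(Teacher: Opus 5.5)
Your proof is correct. It differs from the paper only in that the paper gives no argument at all: it quotes the result as Theorem III.1.3 of Adem--Milgram. You instead supply a self-contained Hilbert-series proof.

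The Molien count is legitimate because $|W_1|=3$ is odd. Concretely, it amounts to diagonalizing $g$ over $\F_4$, with eigenvalues $\omega,\omega^2$ and eigenvectors $u,v$; the invariant monomials are then the $u^av^{b}$ with $a\equiv b \pmod 3$. Your expression $\tfrac13\left(\tfrac{1}{(1-t)^2}+\tfrac{2}{1+t+t^2}\right)$ does equal $\tfrac{1-t+t^2}{(1-t)(1-t^3)}=\tfrac{1+t^3}{(1-t^2)(1-t^3)}$. The transposition trick then shows that $\Z/2[b,d_3]\oplus\Z/2[b,d_3]\,c_+$ embeds in the invariants with the same series, which forces equality.

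Compared with the citation, this gives more than a list of generators. You get a free basis $\{1,c_+\}$ over the Dickson algebra, so one further computation yields the full presentation. Its single relation is $c_+^2+c_+c_-+c_-^2=b^3$, i.e. $c_+c_-=b^3+d_3^2$. Incidentally, this shows that the identity $c_+c_-={w_2}^3$ used later in the paper holds only modulo the negligible class $d_3^2$, which is all that is needed there.

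There are two small slips, neither affecting the conclusion.
\begin{enumerate}
\item Freeness of $\Z/2[x_1,x_2]$ over $\Z/2[d_2,d_3]$ comes from the polynomial ring being Cohen--Macaulay, not from the index of $W_1$ or the parity of $3$. What the parity of $3$ gives is that the $W_1$-invariants are a direct summand via $1+g+g^2$, hence free of rank $2$. In any case your final paragraph proves the needed decomposition directly, so that intermediate paragraph is dispensable.
\item Degree three contains two invariants besides $0$ and $d_3$, namely $c_+$ and $c_-$, not a single one; either can serve as the second basis element.
\end{enumerate}
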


To understand the action of $W_2$ we start with
$W_1 \times W_1 \subset W_2$
acting on the two sets of variables as above. 
Moreover, $\si_2 \subset W_2$ acts on $\Z/2[x_1, x_2, y_1, y_2]$ by sending $x_i \leftrightarrow y_i$. 
There is a third type of element  of $W_2$, or
more generally $W_n$, which together with the first two types of actions generates the full group.

\begin{definition}
    Let $\tau$ denote  $\begin{bmatrix}
        0 & 1 \\ 1 & 0 \\
    \end{bmatrix} \in GL_2( \F_2)$. Define $\tau_{j,k}$ as 
    the element  $(m_1, m_2, \cdots, m_n) \in  GL_2( \F_2)^{\times n} \subset
    W_n^{sym}$ for which
    $m_j = m_k = \tau$ and all other $m_i$ are the identity.  
\end{definition}

Because it constitutes an even permutation, $\tau_{j,k} \in W_n$.   In fact, the $\tau_{j,k}$ generate the quotient
of $W_n$ by $\si_n \wr W_1$. 
The element $\tau_{j,k}$ acts on $H^*(E_{alt}) \cong  \Z/2[x_1, x_2]^{\otimes n}$ by exchanging 
$x_1$ and $x_2$ on both the $j$th and $k$th tensor factors.

Before treating general results, we calculate some invariants for $W_2$.  
We begin with the invariants under the $W_1 \times W_1$ action and (minimally) symmetrize to produce $W_2$ invariants.  
For example, in degree two, we start with $ x_1^2 + x_1x_2+x_2^2$
and symmetrize with respect to the $S_2$ action  exchanging $x$ and $y$ to get
\[ {x_1}^2+x_1x_2+{x_2}^2 + {y_1}^2+y_1y_2+{y_2}^2, \]
which is also invariant under the action of $\tau_{1,2}$ 
and thus is invariant under $W_2$.
In degree three, we begin with $ x_1^3 + x_1^2x_2 + x_2^3$
and first symmetrize with respect to the $\si_2$ action
 to obtain
\[  x_1^3 + x_1^2x_2 + x_2^3 + y_1^3 + y_1^2y_2+ y_2^3.\]
Then, we further symmetrize by the action of $\tau_{1,2}$, getting
\begin{multline*}
    x_1^3 + x_1^2x_2 + x_2^3 + y_1^3 + y_1^2y_2+ y_2^3 + x_1^3 + x_1x_2^2 + x_2^3 + y_1^3 + y_1y_2^2+ y_2^3 \\
    = {x_1}^2x_2+x_1{x_2}^2+{y_1}^2y_2+y_1{y_2}^2.
\end{multline*}
This is the same polynomial we arrive at by starting instead with $x_1^3+x_1x_2^2 + x_2^3$ and minimally symmetrizing. Thus, the two different degree three generators we had for $W_1$ invariants ``merge" as we
use them to produce $W_2$ invariants. 

For illustration,
we list some elements of  $\Z/2[x_1, x_2, y_1, y_2]^{W_2}$, along with their 
expression under the  isomorphism of $\Z/2[x_1, x_2, y_1, y_2]$
with $ \Z/2[x_1, x_2]^{\otimes 2}$ and the notation of Theorem~\ref{admi}:

\begin{itemize}
    \item ${x_1}^2+x_1x_2+{x_2}^2 + {y_1}^2+y_1y_2+{y_2}^2$, which corresponds to $b \otimes 1 + 1 \otimes b$;
    \item ${x_1}^2x_2+x_1{x_2}^2+{y_1}^2y_2+y_1{y_2}^2$, which corresponds to $(c_+ + c_-) \otimes 1 + 1 \otimes (c_+ + c_-)$;
    \item $({x_1}^2+x_1x_2+{x_2}^2)({y_1}^2+y_1y_2+{y_2}^2)$ which corresponds to $b \otimes b$;
    \item $({x_1}^2+x_1x_2+{x_2}^2)({y_1}^2y_2+y_1{y_2}^2)+({y_1}^2+y_1y_2+{y_2}^2)({x_1}^2x_2+x_1{x_2}^2)$, 
    which corresponds to $b \otimes (c_+ + c_-) + (c_+ + c_-) \otimes b$;
    \item $({x_1}^3+{x_1}^2x_2+{x_2}^3)({y_1}^3+y_1{y_2}^2+{y_2}^3)+({y_1}^3+{y_1}^2y_2+{y_2}^3)({x_1}^3+x_1{x_2}^2+{x_2}^3)$,
    which corresponds to $c_+ \otimes c_- + c_- \otimes c_+$;
    \item $({x_1}^3+{x_1}^2x_2+{x_2}^3)({y_1}^3+{y_1}^2y_2+{y_2}^3)+({y_1}^3+y_1{y_2}^2+{y_2}^3)({x_1}^3+x_1{x_2}^2+{x_2}^3)$,
    which corresponds to  $c_+ \otimes c_+ + c_- \otimes c_-$.
    
\end{itemize}

Because $c_+ \sim c_-$ under the exchange relations, the classes in degrees 3, 5, and 6 are all negligible.  In fact we prove below that only 
$b \otimes 1 + 1 \otimes b$ and $b \otimes b$ are needed as multiplicative
generators in the essential quotient.  

For general $W_n$, first observe that 
$$\left( \Z/2[b]^{\otimes n}\right)^{\si_n} \subseteq H^*(E_{alt})^{W_n} \subseteq \Z/2[b, c_+, c_-]^{\otimes n},$$
the first inclusion being an immediate check of invariance
and the second following from the fact that  $W_1^{\times n} \subset W_n$ and Theorem~\ref{admi}.  The key calculation 
in determining the essential quotient of the cohomology
of $\alt_{4n}$ is the following.

\begin{theorem}\label{Wninvts}
    For $n>1$, the quotient of $H^*(E_{alt})^{W_n}$, 
    by its intersection with the negligible ideal
in $H^*(E_{alt})$  is spanned
    by the image of the subring $\left( \Z/2[b]^{\otimes n}\right)^{\si_n}$ of $ H^*(E_{alt})^{W_n}$.
\end{theorem}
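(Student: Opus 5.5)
The plan is to work entirely with the order-two restriction description of the essential quotient. An element $f \in H^*(E_{alt}) \cong \Z/2[x_{i,1}, x_{i,2} : 1 \le i \le n]$ of degree $d$ restricts to the order-two subgroup generated by $v \in E_{alt}\setminus\{0\}$ as $f(v)\,t^d$, where $f(v) \in \F_2$ is the value of the polynomial at the point $v \in \F_2^{2n}$. By Theorem~\ref{elemabelian} (or directly from the order-two restriction property), the class of $f$ modulo negligibles is determined by $d$ together with the function $F_f : E_{alt}\setminus\{0\} \to \F_2$, $v \mapsto f(v)$. Expanding $F_f$ by M\"obius inversion over supports, $F_f(v) = \sum_S a_S\,[S \subseteq {\rm supp}(v)]$, the functions realized in degree $d$ are exactly those with $a_S = 0$ for $|S| > d$. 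Here $S$ runs over nonempty sets of the $2n$ coordinate variables, and the summand for $S$ corresponds to descendents of the square-free monomial $\prod_{s\in S} x_s$.

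First I determine the shape of $F_f$ for $f$ invariant. If $f$ is $W_n$-invariant then $F_f$ is $W_n$-invariant. The group $W_1$ acts transitively on $V_2 \setminus \{0\}$, and $\si_n$ permutes the factors, so the $W_n$-orbits on $E_{alt}\setminus\{0\}$ are indexed by $k = \#\{i : v_i \ne 0\}$, $1 \le k \le n$. Hence $F_f(v) = g(k)$ for some function $g$.

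Next I compare with the subring $(\Z/2[b]^{\otimes n})^{\si_n}$. Write $b_i$ for $b$ in the $i$th factor. Since $b_i(v) = 1$ exactly when $v_i \neq 0$, the elementary symmetric polynomial $e_j(b_1,\dots,b_n)$ has value $\binom{k}{j}$ at $v$. Its descendents (multiply one $b_i$ in each monomial by a power of itself) realize this same function in every even degree $\ge 2j$. Moreover $[v_i\ne 0] = x_{i,1}+x_{i,2}+x_{i,1}x_{i,2}$, so the M\"obius expansion of $\binom{k}{j}$ has top terms exactly the sets $S$ consisting of $j$ full pairs $\{x_{i,1},x_{i,2}\}$, each with coefficient $1$. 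Writing $g(k) = \sum_j \alpha_j \binom{k}{j}$ (the binomials form a basis of functions on $\{1,\dots,n\}$), the M\"obius coefficient of $F_f$ on a union of $j$ full pairs, with $j$ maximal such that $\alpha_j\neq0$, is $\alpha_j$. Since $f$ has degree $d$, this forces $2j \le d$. Consequently, for even $d$ the element $\sum_j \alpha_j\, e_j(b)\cdot(\text{power of } b)$ of degree $d$ has the same restriction function as $f$, and therefore agrees with $f$ modulo negligibles.

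The main obstacle is odd $d$, where no element of $\Z/2[b]$ has degree $d$, so I must show $g \equiv 0$. This is where the $\tau_{j,k}$ and $n>1$ enter. For $v$ with support of size $k$, I restrict $f$ along a homomorphism $\psi: V_2 \to E_{alt}$ whose image contains $v$ and which intertwines a full $GL_2(\F_2)$ action on $V_2$ with elements of $W_n$.
\begin{itemize}
\item If $k$ is even, take $\psi(u) = (u,\dots,u,0,\dots,0)$ on the support. The diagonal $W_1$ and $\tau_{1,2}\tau_{3,4}\cdots\tau_{k-1,k}$ act on the image through $GL_2(\F_2)$.
\item If $k<n$, pair the last support coordinate with a zero coordinate via $\tau_{k,k+1}$, which acts trivially on that zero coordinate.
\end{itemize}
In both cases $\psi^*f$ is an odd-degree $GL_2(\F_2)$-invariant in $\Z/2[x_1,x_2]$. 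Such an invariant lies in the Dickson algebra $\Z/2[d_2,d_3]$ with $d_3 = x_1^2x_2+x_1x_2^2$, so being of odd degree it is divisible by $d_3$, which vanishes at every point of $\F_2^2$. Hence $g(k) = F_f(v) = 0$. The remaining case $k = n$ odd is where I expect real difficulty. There I would instead determine $g(n)$ from the already-established vanishing $g(1)=\dots=g(n-1)=0$ together with the degree bound: the only surviving term is $\alpha_n\binom{k}{n}$, whose top M\"obius term has size $2n$. I would then try to show it is incompatible with invariance under a single $\tau_{1,2}$ on odd-degree components, for instance by comparing M\"obius coefficients on sets containing exactly one of $x_{1,1},x_{1,2}$. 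If that fails, I would fall back on an explicit analysis of $\Z/2[b,c_+,c_-]^{\otimes n}$, using that $c_+ \equiv c_-$ modulo exchange relations and that the odd-degree part must be symmetrized by the $\tau_{j,k}$ as in the $W_2$ computation above.
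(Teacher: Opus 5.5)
\textbf{There is a genuine gap, and it is in the even-degree step rather than in odd degree.} Your reduction to restriction functions $F_f$ is correct. So is the $\psi$-argument giving $g(k)=0$ in odd degree for $k$ even or $k<n$.

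\textbf{The even-degree step is false.} For $j\ge 2$ there is no $\si_n$-invariant way to ``multiply one $b_i$ in each monomial by a power of itself,'' and symmetrizing introduces multiplicities. The $\si_n$-orbit sum of $b_1^{\lambda_1}\cdots b_\ell^{\lambda_\ell}$ has restriction function $\binom{k}{\ell}$ times the multinomial coefficient of the multiplicities of the $\lambda_i$, and this coefficient is often even. For $n=2$, $d=6$, the subring is spanned by $b_1^3+b_2^3$ and $b_1^2b_2+b_1b_2^2$, whose functions are $\binom{k}{1}$ and $2\binom{k}{2}=0$. So $\binom{k}{2}$ is not reached, although $2\cdot 2\le 6$. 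In general $\binom{k}{2}$ is reached in degree $2D$ only for $D$ even.

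This cannot be repaired at the level of functions. $\binom{k}{2}$ is a $W_2$-invariant function realized in degree $6$ (by $b_1^2b_2$). So the two consequences of invariance you extract---$F_f$ depends only on $k$, and the M\"obius degree bound---are strictly weaker than the theorem. The theorem does hold in this degree, but for a reason invisible to $F_f$. The components of a degree-$6$ $W_2$-invariant in bidegrees $(4,2),(2,4)$ are multiples of $b_1^2b_2+b_1b_2^2$. Those in bidegree $(3,3)$ are combinations of $c_+\otimes c_+ + c_-\otimes c_-$ and $c_+\otimes c_- + c_-\otimes c_+$. All of these have zero function, and bidegrees $(6,0),(0,6)$ only contribute $\binom{k}{1}$. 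Because $W_n$ has even order, taking invariants does not commute with $f\mapsto F_f$.

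\textbf{The open odd case has the same problem.} For $k=n$ odd, $[k=n]$ is a $W_n$-invariant function realized in degree $2n+1$ by $b_1\cdots b_{n-1}c_+$ (with $c_+$ in the last factor). So no comparison of M\"obius coefficients can rule it out. Your restriction trick also does not obviously extend. Elements of $W_n$ have an even number of block components outside $W_1$, so for $n$ odd none acts by $\tau$ on a graph $\{(A_1u,\dots,A_nu)\}$ with the $A_i$ invertible.

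\textbf{What is missing, in both parities, is your fallback.} This is essentially the paper's orbit argument.
Expand a $W_n$-invariant in a basis of $H^*(E_{alt})^{W_1^{\times n}}$ made of tensor products of elements of $H^*(V_2)^{W_1}$ that are permuted by $\tau$. Such bases exist degreewise, since every finite-dimensional $\Z/2[C_2]$-module is a sum of trivial and free modules. The invariant then becomes a sum of $W_n$-orbit sums.

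Suppose some tensor factor is moved by $\tau$. The orbit then splits into orbits of the $2$-group generated by the $\tau_{i,j}$. Each is nontrivial because $n>1$, hence of even size. All members of such an orbit have the same restriction function, since $\tau$ does not change the restriction function of any element of $H^*(V_2)^{W_1}$ (this is the statement $c_+\equiv c_-$). So the orbit sum is negligible.

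The remaining orbit sums have all factors in $\Z/2[b,c_++c_-]$. They are therefore congruent to elements of $(\Z/2[b]^{\otimes n})^{\si_n}$, and to zero in odd degree.
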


\begin{proof}
    We first treat the $n=2$ case.  Let $H \subset W_2$ be the subgroup generated by $\tau_{1,2}$ and the permutation of the two factors of $V_2$.   
    The action of $W_2$, and thus that of $H$, permutes the basis of tensor products of monomials.  We may therefore decompose a $W_2$-invariant $x$ as a sum 
    $$x = \sum_i \sum_{(\sigma) \in H/K_i} \sigma (f_i \otimes g_i),$$
    where $f_i$ and $g_i$ are monomials and 
    each $K_i$ is a subgroup of $H$.
      Explicitly, considering possible subgroups $K_i \subset H \cong C_2 \times C_2$, each sum
      $\sum_{(\sigma) \in H/K_i} \sigma (f_i \otimes g_i)$ is one of the following, where $f$, $f_+$, $f_-$, $g$, $g_+$ and $g_-$ are all monomials:
\begin{enumerate}
    \item $f \otimes f$ fixed by $\tau$;
    \item $f_+ \otimes f_+ + f_- \otimes f_-$, where $\tau(f_+) = f_-$ and $f_+ \neq f_-$;
    \item $f_+ \otimes f_- + f_- \otimes f_+$, where $\tau(f_+) = f_-$ and $f_+ \neq f_-$;
    \item $f \otimes g + g \otimes f$, where $f$ and $g$ are fixed by $\tau$;
    \item $f_+ \otimes g_+ + g_+ \otimes f_+ + f_- \otimes g_- 
    + g_- \otimes f_-$, where  $\tau(f_+) = f_-$, $f_+ \neq f_-$,$\tau(g_+) = g_-$ and $g_+ \neq g_-$.
\end{enumerate}
The invariants of $\Z/2[b,c_+,c_-]$ under $\tau$ are $\Z/2[b,c]$, where $c = c_+ + c_-$.  But  $c_+ +  c_- = {x_1}^2 x_2 + x_1 {x_2}^2$, which is negligible.  Thus any $\tau$ invariant is equivalent to an element of $\Z/2[b]$.  
Also, because $c_+ \sim c_-$  any monomial $f_+ \in \Z/2[b, c_+, c_-]$ will be equivalent to $\tau(f_+)$ modulo the negligible ideal.  Considering the orbit types above in the quotient by the negligible ideal we deduce
the following equivalences:
\begin{enumerate}
    \item $f \otimes f$ is equivalent to an element of $(\Z/2[b]^{\otimes 2})^{\si_2}$;
    \item $f_+ \otimes f_+ + f_- \otimes f_-$ is zero  because $f_+ \sim f_-$;
    \item $f_+ \otimes f_- + f_- \otimes f_+$ is zero;
    \item $f \otimes g + g \otimes f$ is equivalent to an element of $(\Z/2[b]^{\otimes 2})^{\si_2}$;
    \item $f_+ \otimes g_+ + g_+ \otimes f_+ + f_- \otimes g_- 
    + g_- \otimes f_-$ is zero.
\end{enumerate}
Thus, considering the $H$-orbit monomial decomposition of any $W_2$ invariant, it will be equivalent to an element of $(\Z/2[b]^{\otimes 2})^{\si_2}$, establishing the result in this case.

In general, let $H_{i,j}$ be the subgroup of $W_n$ generated by
$\tau_{ij}$ and the permutation of the $i$th and $j$th
factors of $V_2$.
Decompose a $W_n$ invariant into $H_{i,j}$ orbits of tensor products of monomials,
each consisting of a sum of one, two or four tensor products of monomials as in the argument above.
Following the orbit analysis above, 
each such orbit will be equivalent to sum of one or two 
tensor products of monomials
whose $i$th and $j$th entries are in $\Z/2[b]$ and which
is invariant with respect to permuting the $i$th and $j$th entries.
    Applying this argument
    for all $i, j$ we see that modulo
    negligible elements, any $W_n$ invariant is equivalent to
    an element of $\left(\Z/2[b]^{\otimes n}\right)^{\si_n}$.
\end{proof} 

Thus, when $n>1$ the involutions $\tau_{i,j}$ force much
of the cohomology to be negligible. 

We can now complete the proofs of our main results.\\

\begin{proof} 
[{\bf Proof of Theorem~\ref{main}.}]
    We established the first case and half of the second case above.  The third case is immediate from Theorem~\ref{admi} and the fact that $c_+ = c_-$
    modulo the negligible ideal.
    
    The remaining part of the second case states that that essential cohomology
    of $\alt_{4m}$ with $m>1$
    are generated by Stiefel-Whitney classes of even degree.
    Recall that the restriction map $H^*_{ess}(A_{4n}) \to H^*_{ess}(E_{alt})$ is injective. Moreover, the image of this restriction map is contained in
$H^*_{ess}(E_{alt},Z/2)^{W_n}$. Thus it suffices to show that
$Q_n$ is generated by Stiefel-Whitney classes of even degree.

    The standard representation of $\si_4$ restricted to $V_2$ can
    be decomposed as a sum of one-dimensional representations.  The three non-trivial
    summands are spanned by $e_1 + e_2 - e_ 3 - e_4$, by 
    $e_1 - e_2 + e_3 - e_4$ and by $e_1 - e_2 - e_3 + e_4$.
    The first one is fixed by $(12)(34)$ and acted on non-trivially
    by the other two elements of $V_2$.  It is thus the pull-back
    of the sign representation of $C_2$ by the quotient of $V_2$
    by the subgroup given by the identity and $(12)(34)$.
    The second and third representations are also pulled back
    from such quotient homomorphisms, for the other two 
    non-trivial elements.
    The associated bundles have first Stiefel-Whitney classes given by $x_1$, $x_2$
    and $x_1 + x_2 \in H^1(V_2 )$ respectively.  Thus the total
    Stiefel-Whitney class is $(1 + x_1)(1+ x_2)(1 + x_1 + x_2)$,
    which in our notation above is equal to $1 + b + c$.

    The standard representation of $\si_{4n}$  restricts to 
    $n$ copies of these, and thus has total Stiefel-Whitney
    class $(1 + b + c)^{\otimes n} \in \bigotimes_n  \Z/2[x_1, x_2]$.  Modulo  negligible elements this is
    equivalent to $(1+b)^{\otimes n}$.  Thus, elementary symmetric invariants in $1$ and $b$
    are all restrictions of Stiefel-Whitney classes.  By 
    Theorem~\ref{Wninvts}, 
    Stiefel-Whitney classes 
    thus generate the essential quotients of cohomology
    of alternating groups.  

    The general case builds on these and follows the same logic.  Choose $E$ to be the subgroup of $G$ given by the product of $E_{sym}$ and $E_{alt}$
    of its factors, which is detecting.
    The Weyl group is the product of Weyl groups, and
    the module of 
    Weyl group invariants is the tensor product of Weyl group
    invariants of the factors. These are exhausted by the ring generated
    by the pullbacks of generators of the factors.
 \end{proof}

\begin{proof}[{\bf Proofs of Theorems~\ref{relations} and \ref{basis}.}]
We first establish relations and an additive basis when $G = \alt_4$.
  Immediate calculation following Theorem~\ref{admi} gives
$c_+ c_- = {w_2}^3$. Passing to the essential quotient yields
$c^2 = {w_2}^3$. Additively, one then has at most a single
monomial in each degree, either ${w_2}^n$ in even degrees or ${w_2}^{n-1} c$ in odd degrees.  Restricting to any order-two 
subgroup of the detecting subgroup $V_2$ will send $b$ to $t^2$
and $c$ to $t^3$, showing these monomials form a basis. 

To establish relations between products of Stiefel-Whitney classes 
for arbitrary products of symmetric and alternating groups,
it suffices to consider $G= \si_{2n}$.  We follow a method of Serre.
Recall that the detecting subgroup $E_{sym}$ for $\si_{2n}$ is generated
by transpositions $(12)$, $(34)$, etc.
Consider restriction map $\psi_i$ from
the cohomology of $E_{sym}$ to the order-two subgroup
defined by the product of the first $i$ of these transpositions. Together these $\psi_i$ give a homomorphism $\Psi$ from 
$H^*(E ) \cong \Z/2[x_1, \ldots, x_n]$ to the coproduct $\coprod_n \Z/2[t]$ where the $\psi_i$ factor sends $x_j$ to $t$ for $j \le i$ or to $0$ when $j > i$.  Because any order-two subgroup
of $\si_{2n}$ is conjugate to one of these order-two subgroups,
the composite of $\Psi$ with restriction to $E_{sym}$ passes to 
$H^*_{ess}(\si_{2n} )$, and defines
an injection on that quotient.

We  calculate relations among  Stiefel-Whitney classes by looking at their images under restriction to $E_{sym}$, namely to symmetric
polynomials as calculated in the proof of Theorem~\ref{main},
composed with $\Psi$.   
Under this composite map to $\coprod_n  \Z/2[t]$, 
    \[ w_i  \text{ maps to }\left(\binom{1}{i} t^i, \binom{2}{i} t^i, \cdots, \binom{n}{i} t^i, \cdots \right). \]
 The product $w_iw_j$ maps to 
    \[  \left(\binom{1}{i}\binom{1}{j} t^{i+j}, \binom{2}{i}\binom{2}{j} t^{i+j}, \cdots, \binom{n}{i}\binom{n}{j} t^{i+j}, \cdots \right).\]
    If (and only if) $i$ and $j$ are bitwise disjoint,  by Lucas's Theorem working modulo two the above is equal to 
    \[ \left(\binom{1}{i+j} t^{i+j}, \binom{2}{i+j} t^{i+j}, \cdots, \binom{n}{i+j} t^{i+j}, \cdots \right),\]
    which is  the image of $w_{i+j}$.
Through restriction, this relation holds as well in essential cohomology
of alternating groups, and more generally for classes pulled back
to a product of symmetric and alternating groups.

By these relations,
the $w_{2^i}$ can be taken as ring generators, and the square-free
products of these are equal to $w_j$ where $j = \sum 2^i$ is the
dyadic expansion of $j$.
Any descendent of $w_j$ 
will have total degree which differs from $j$ by
the multiples of the $2^i$ which occur, namely 
multiples of the $2$-part
of $j$. 

To see that the descendents of the $w_j$ are independent in $H^*_{ess}(\si_{2n} )$, 
we calculate that 
the image under $\Psi$ of a degree $d$ descendent of $w_j$ will be $t^{d-j} \cdot \Psi(w_j)$.  
But the image of  $w_j$ in $\coprod_n  \Z/2[t]$ will start
with exactly $j-1$ zeros. Thus, collectively in degree $d$, 
the $t^{d-j} \cdot \Psi(w_j)$ are  ``upper-triangular'', so
these descendents are linearly independent.

The argument for linear independence in the essential quotient of an alternating group is similar.  We can choose an
appropriate sequence of order-two 
elements of $E_{alt}$, such
as $(12)(34)$, $(12)(34)(56)(78)$, etc.,  and define 
$\phi_i$ to be the restriction from the cohomology of the
alternating group to the cohomology of the $i$th of the corresponding subgroups.
Recall that $w_{2j}$ maps to the $j$th symmetric polynomial
on classes $b = {x_1}^2 + x_1 x_2 + {x_2}^2$ in $H^*(E_{alt} ) 
\cong \Z/2[x_1, x_2]^{\otimes n}$.  The composite of this
restriction with $\phi_i$  will send 
 \[ w_{2j}  \text{  to }\left(\binom{1}{j} t^{2j}, \binom{2}{j} t^{2j}, \cdots, \binom{n}{j} t^{2j}, \cdots \right). \]
The argument above that descendents of $w_j$ are linearly independent
in $H^*_{ess}(\si_{2n})$ now applies verbatim to descendents 
of $w_{2j}$ in $H^*_{ess}(\alt_{2m} )$.

For products of symmetric and alternating groups, 
we consider products of Stiefel-Whitney classes pulled back from each factor, along
with possible $c$ or $w_2 c$ classes from $\alt_4$. These products and their descendents span since the $w_{2^i}$ and $c$ generate as an exchange algebra.

To see linear independence, we recast the above results in terms of pairings between the span of descendents 
of appropriate monomials ($w_j$ for $G = \si_{2n}$, $w_{2j}$ for $G = \alt_{2m}$, and $w_2$ or $c$ for $G = \alt_4$) and ${\rm Hom}(H^*_{ess}(E ), \Z/2)$. In the preceding arguments 
the process of taking the coefficient of $t^k$ defined
such homomorphisms.  
We used restriction to the cohomology of chosen order-two subgroups, 
followed by the standard non-zero homomorphisms from their cohomology to show that such a pairing is full rank for a single symmetric or alternating group. 
For a product of symmetric and alternating groups,
we take the order-two subgroup defined by all products of order-two elements constructed for the factors.  The pairing with ${\rm Hom}(H^*_{ess}(E), \Z/2 )$ will include
 the tensor product of pairings with the individual groups, and thus will be full rank.
\end{proof}

Naively, there is no K\"unneth theorem for essential cohomology, as
one can exchange classes pulled back from different factors.  But, pull-backs of generators still generate,
and this argument shows that the square-free quotient is  the tensor product of the square-free quotients
for constituent factors.

\bibliographystyle{alpha}
\bibliography{essential}

@preamble{"\def\cftil#1{\ifmmode\setbox7\hbox{$\accent"5E#1$}\else     \setbox7\hbox{\accent"5E#1}\penalty 10000\relax\fi\raise 1\ht7     \hbox{\lower1.15ex\hbox to 1\wd7{\hss\accent"7E\hss}}\penalty 10000     \hskip-1\wd7\penalty 10000\box7} " #
   "\def\cfudot#1{\ifmmode\setbox7\hbox{$\accent"5E#1$}\else     \setbox7\hbox{\accent"5E#1}\penalty 10000\relax\fi\raise 1\ht7     \hbox{\raise.1ex\hbox to 1\wd7{\hss.\hss}}\penalty 10000     \hskip-1\wd7\penalty 10000\box7} "}

@article{GiSi21,
	author = {Giusti, Chad and Sinha, Dev},
	doi = {10.1515/crelle-2020-0016},
	fjournal = {Journal f\"ur die Reine und Angewandte Mathematik. [Crelle's Journal]},
	issn = {0075-4102,1435-5345},
	journal = {J. Reine Angew. Math.},
	mrclass = {20J06 (55P35 55S10)},
	mrnumber = {4227589},
	mrreviewer = {David\ Benson},
	pages = {1--51},
	title = {Mod-two cohomology rings of alternating groups},
	url = {https://doi.org/10.1515/crelle-2020-0016},
	volume = {772},
	year = {2021}}

@article{miln70,
    author = {Milnor, John},
    title = {Algebraic K-theory and quadratic forms}, 
    journal = {Invent. Math.},
    Volume = {9},
    year = {1969/1970}, 
    pages = {318-344}
}

@article {B-FS94,
    AUTHOR = {Bayer-Fluckiger, Eva and Serre, Jean-Pierre},
     TITLE = {Torsions quadratiques et bases normales autoduales},
   JOURNAL = {Amer. J. Math.},
  FJOURNAL = {American Journal of Mathematics},
    VOLUME = {116},
      YEAR = {1994},
    NUMBER = {1},
     PAGES = {1--64},
      ISSN = {0002-9327,1080-6377},
   MRCLASS = {11E72 (11E04 12G05)},
  MRNUMBER = {1262426},
MRREVIEWER = {Juliusz\ Brzezi\'nski},
       DOI = {10.2307/2374981},
       URL = {https://doi.org/10.2307/2374981},
}

@article {Serre26,
    author = {Serre, Jean-Pierre},
    title = {Sur les \'el\'ements n\'eegligeables de la cohomologie mod 2. Partial preprint shared through e-mail.},
    year = {2026}
    }

@incollection{Serre24,
    author = {Serre, Jean-Pierre},
title = {Invariants cohomologiques mod 2 et invariants de {Witt} des groupes altern\'es.},  booktitle = {Jean-Pierre Serre Oeuvres, Volume V},
  publisher = {Springer-Verlag },
  year      = {2026},
  pages     = {645--671},
  address   = {Cham, Switzerland}
}

@article{GM22,
title = {Negligible degree two cohomology of finite groups},
journal = {Journal of Algebra},
volume = {611},
pages = {82-93},
year = {2022},
issn = {0021-8693},
doi = {https://doi.org/10.1016/j.jalgebra.2022.07.039},
url = {https://www.sciencedirect.com/science/article/pii/S0021869322003854},
author = {Matthew Gherman and Alexander Merkurjev},
}

@article{GM24,
title = {Krull dimension of the negligible quotient in mod p cohomology of a finite group},
journal = {Journal of Pure and Applied Algebra},
volume = {228},
number = {3},
pages = {107489},
year = {2024},
issn = {0022-4049},
doi = {https://doi.org/10.1016/j.jpaa.2023.107489},
url = {https://www.sciencedirect.com/science/article/pii/S0022404923001718},
author = {M. Gherman and A. Merkurjev},
}

@phdthesis{Gherman23,
  title={Negligible Cohomology},
  author={Gherman, Matthew Michael},
  year={2023},
  school={University of California, Los Angeles},
  url={https://escholarship.org/uc/item/0zj2b9nk}
}

@book{MaMi79,
	author = {Madsen, Ib and Milgram, R. James},
	isbn = {0-691-08225-1},
	mrclass = {57N70 (55P47 55R35 57Q60)},
	mrnumber = {548575},
	mrreviewer = {Yu. P. Solov\^a${}^2$ev},
	pages = {xii+279},
	publisher = {Princeton University Press, Princeton, N.J.; University of Tokyo Press, Tokyo},
	series = {Annals of Mathematics Studies},
	title = {The classifying spaces for surgery and cobordism of manifolds},
	volume = {92},
	year = {1979}}

@book{AdMi94,
	author = {Adem, Alejandro and Milgram, R. James},
	doi = {10.1007/978-3-662-06280-7},
	edition = {Second},
	isbn = {3-540-20283-8},
	mrclass = {20J06 (18G99 55R35 55R40)},
	mrnumber = {2035696},
	pages = {viii+324},
	publisher = {Springer-Verlag, Berlin},
	series = {Grundlehren der Mathematischen Wissenschaften [Fundamental Principles of Mathematical Sciences]},
	title = {Cohomology of finite groups},
	url = {http://dx.doi.org/10.1007/978-3-662-06280-7},
	volume = {309},
	year = {2004}}

@article{GSS12,
	author = {Giusti, Chad and Salvatore, Paolo and Sinha, Dev},
	doi = {10.1112/jtopol/jtr031},
	fjournal = {Journal of Topology},
	issn = {1753-8416},
	journal = {J. Topol.},
	mrclass = {20J06 (20B30)},
	mrnumber = {2897052},
	mrreviewer = {Nobuaki Yagita},
	number = {1},
	pages = {169--198},
	title = {The mod-2 cohomology rings of symmetric groups},
	url = {http://dx.doi.org/10.1112/jtopol/jtr031},
	volume = {5},
	year = {2012}}

@book {Se03,
    AUTHOR = {Serre,
              Jean-Pierre},
     TITLE = {Cohomological invariants in {G}alois cohomology},
    SERIES = {University Lecture Series},
    VOLUME = {28},
 PUBLISHER = {American Mathematical Society, Providence, RI},
      YEAR = {2003},
     PAGES = {viii+168},
      ISBN = {0-8218-3287-5},
   MRCLASS = {11E72 (12G05)},
  MRNUMBER = {1999383},
MRREVIEWER = {Gr\'egory\ Berhuy},
       DOI = {10.1090/ulect/028},
       URL = {https://doi.org/10.1090/ulect/028},
}

@article{Fesh02,
	author = {Feshbach, Mark},
	coden = {TPLGAF},
	fjournal = {Topology. An International Journal of Mathematics},
	issn = {0040-9383},
	journal = {Topology},
	mrclass = {20J06 (13A50)},
	mrnumber = {MR1871241 (2002h:20074)},
	mrreviewer = {Stuart Martin},
	number = {1},
	pages = {57--84},
	title = {The mod 2 cohomology rings of the symmetric groups and invariants},
	volume = {41},
	year = {2002}}

\end{document}